\documentclass[12pt]{amsart}
\usepackage{amssymb}
\usepackage{amsmath, amscd}
\usepackage{amsthm}
\usepackage{xcolor}

\newtheorem{theorem}{Theorem}[section]

\newtheorem{proposition}[theorem]{Proposition}
\newtheorem{lemma}[theorem]{Lemma}

\newtheorem{corollary}[theorem]{Corollary}
\theoremstyle{definition}

\newtheorem{example}[theorem]{Example}

\newtheorem{remark}[theorem]{Remark}

\topmargin0cm \headheight0cm \headsep1cm \topskip0cm \textheight22cm \footskip1.5cm \textwidth15cm
\oddsidemargin0cm \evensidemargin0cm
\parindent15pt




\def\val#1{\vert #1 \vert}

\usepackage[utf8]{inputenc}
\usepackage{amssymb}
\usepackage{amsmath, amscd}
\usepackage{amsthm}
\usepackage{xcolor}

\topmargin0cm \headheight0cm \headsep1cm \topskip0cm \textheight21cm \footskip1.5cm \textwidth14cm
\oddsidemargin0cm \evensidemargin0cm
\parindent15pt

\begin{document}

\author[A.R. Chekhlov]{Andrey R. Chekhlov}
\address{Department of Mathematics and Mechanics, Regional Scientific and Educational Mathematical Center, Tomsk State University, 634050 Tomsk, Russia}
\email{cheklov@math.tsu.ru; a.r.che@yandex.ru}
\author[P.V. Danchev]{Peter V. Danchev}
\address{Institute of Mathematics and Informatics, Bulgarian Academy of Sciences, 1113 Sofia, Bulgaria}
\email{danchev@math.bas.bg; pvdanchev@yahoo.com}
\author[P.W. Keef]{Patrick W. Keef}
\address{Department of Mathematics, Whitman College, Walla Walla, WA 99362, USA}
\email{keef@whitman.edu}

\title[Semi-generalized Bassian groups] {Semi-generalized Bassian groups}
\keywords{Bassian groups, generalized Bassian groups, semi-generalized Bassian groups}
\subjclass[2010]{20K10, 20K20, 20K30}

\maketitle

\begin{abstract} As a common non-trivial generalization of the concept of a proper generalized Bassian group, we introduce the notion of a {\it semi-generalized Bassian} group and initiate its comprehensive investigation. Precisely, we give a satisfactory characterization of these groups by showing in the cases of $p$-torsion groups,  torsion-free groups and splitting mixed groups their complete description. Moreover, we classify the groups with the clearly related property that every subgroup is essential in a direct summand of the whole group.
\end{abstract}

\vskip2.0pc

\section{Introduction and Motivation}
	
Throughout the rest of the paper, unless specified something else, all groups will be additively written Abelian groups. We will primarily use the notation and terminology of \cite{F0,F1,F2}, but we will follow somewhat those from \cite{K} and \cite{Gr} as well. We just recall that an arbitrary subgroup $H$ of a group $G$ is {\it essential} in $G$ if, for any non-zero subgroup $S$ of $G$, the intersection between $H$ and $S$ is also non-zero. It is an elementary exercise that any subgroup will be essential as a subgroup of itself and thus, in particular, the zero subgroup $\{0\}$ is essential in itself too.

We begin with a quick review of some of the most important notions which motivate our writing of the present article.

Imitating \cite{CDG1}, a group $G$ is said to be {\it Bassian} if the existence of an injective homomorphism $G \to G/N$, for some subgroup $N$ of $G$, forces that $N = \{0\}$. More generally, mimicking \cite{CDG2}, if this injection implies that $N$ is a direct summand of $G$, then $G$ is said to be {\it generalized Bassian}.

Note that Bassian groups were completely characterized in \cite{CDG1}. A critical example of a generalized Bassian group that is {\it not} Bassian is the so-called an infinite {\it elementary group} which is an arbitrary infinite direct sum of the cyclic group $\mathbb{Z}(p)$ for some fixed prime $p$. For our convenience, and to shorten the exposition, we shall hereafter denote by a {\it B+E-group} the direct sum of a Bassian group $B$ and an arbitrary elementary group $E$. It was proved in \cite{DK} that any generalized Bassian group necessarily has finite torsion-free rank as well as it was shown that a group is a B+E-group if, and only if, it is a subgroup of a generalized Bassian group. In particular, any generalized Bassian group is a B+E-group and the B+E-groups are closed under arbitrary subgroups. Likewise, it was conjectured there that every B+E-group is generalized Bassian. Notice also that it is still unsettled the problem of whether or {\it not} a subgroup of a generalized Bassian group is again so. However, for some other major properties of subgroups of generalized Bassian groups, we refer to \cite{DG} as well.

Taking into account all of the information we have so far, and in order to refine the group property of being generalized Bassian, we may define a group $G$ to be {\it semi-generalized Bassian} if, for any its subgroup $H$, the injective homomorphism $G \to G/H$ implies that $H$ is essential in a direct summand of $G$. Clearly, under our circumstances requested above, a Bassian group is semi-generalized Bassian and, even more generally, a generalized Bassian group is semi-generalized Bassian.

However, for an attractive exploration in-depth of these groups, it is needed to have at least one example of a semi-generalized Bassian group which is {\it not} generalized Bassian, calling it {\it proper} semi-generalized Bassian. In the next lines and also in what follows below, we demonstrate an abundance of such groups. In fact, the following two key points hold:

\medskip

$\bullet$ A routine check shows that each torsion-free divisible group of infinite rank will {\it not} be generalized Bassian, but it will satisfy the new definition of a semi-generalized Bassian group; in fact, the purification of any subgroup will be a direct summand containing the subgroup as an essential subgroup, as required.

\medskip

$\bullet$ A simple inspection illustrates that the direct sum of an infinite number of copies of the cyclic group $\mathbb{Z}(p^n)$ of order $n$ for some fixed natural number $n\geq 1$. In that case, again, any subgroup is an essential subgroup of a direct summand, so it trivially satisfies the new definition of a semi-generalized Bassian group and, if $n>1$, then it will surely {\it not} be generalized Bassian, as pursued.

\medskip

Our further work is organized as follows: In the present first section, we give a brief retrospection of the obtained by us results. In the next second section, we formulate our main results and give their complete proofs. Specifically, this paper's main goal is to characterize three classes of groups as follows:

\medskip

(1) The semi-generalized Bassian groups that are torsion-free.

(2) The semi-generalized Bassian groups that are torsion.

(3) The semi-generalized Bassian groups that are splitting mixed. 

(4) The groups with the clearly related property that every subgroup is essential in a direct summand of the group.

\medskip

Concretely, the most important of them state like this: {\it A direct summand of an arbitrary semi-generalized Bassian group is again so} (see Theorems~\ref{summands}). Moreover, excluding the general mixed case, we obtain the following complete characterizations of (reduced) semi-generalized Bassian groups: {\it A torsion-free group is semi-generalized Bassian if, and only if, it possesses finite rank or $G\cong D\oplus R$, where $D$ is divisible and $R$ is a finite-rank fully-decomposable homogeneous group} (see Theorem~\ref{5}); {\it A $p$-group $G$ is semi-generalized Bassian if, and only if, $G$ is divisible, or $G$ has finite $p$-rank, or for some positive integer $n$ we have $G\cong G'\oplus F$, where $G'$ is a group with $f_G'(\alpha)=0$ unless either $\alpha=n-1$ or $\alpha=n$, and $F$ is a finite subgroup such that $F[p]\subseteq p^{n+1} G[p]$} (see Theorem~\ref{main}); {\it The splitting group $G=T\oplus R$, where $R$ is the torsion-free part of $G$, is a semi-generalized Bassian group if, and only if, $T$ and $R$ are both semi-generalized Bassian groups and, moreover, if the rank of $R$ is infinite, then $T$ is divisible} (see Proposition~\ref{newone2}); {\it A reduced $p$-group $G$ has the property that every subgroup is an essential subgroup of a summand of $G$ if, and only if, $f_G(\alpha)=0$ unless either $\alpha=n-1$ or $\alpha=n$, and so any such a group is necessarily semi-generalized Bassian} (see Theorem~\ref{6}); {\it If $G$ is either a mixed or a torsion-free group, then every subgroup of $G$ is essential in some direct summand if, and only if, $G=D\oplus R$, where $D=T(D)\oplus D_0$ is a divisible group and $R$ is a homogeneous fully decomposable group of finite rank, and thus any such a group is semi-generalized Bassian} (see Theorem~\ref{65}). We finish our work in the final third section with certain comments and a list of several open problems which quite logically arise and, hopefully, stimulate a future study on the explored subject.

\section{Main Results and Proofs}

We start here with the following two constructions which generalize the given above two preliminary examples, thus motivating and enabling us to expand the existing in the subject results.

\begin{example}\label{(1)} Every divisible group is semi-generalized Bassian.
\end{example}

\begin{proof} This follows automatically, because for each divisible group $D$ any its subgroup is essential in
some direct summand of $D$ (cf. \cite[Theorem~24.4]{F1}).
\end{proof}

It is worthwhile noting that a torsion divisible group need {\it not} be generalized Bassian (and thus {\it not} Bassian), so we exhibit another type of a {\it proper} semi-generalized Bassian group.

\medskip

The next claim is a partial case of Theorem~\ref{6}, but for completeness of the exposition we give a new, independent verification of its validity.

\begin{example}\label{(2)} The group $G=\bigoplus_{\alpha}\mathbb{Z}(p^n)$ is semi-generalized Bassian for all ordinals $\alpha$ and integers $n\geq 1$.
\end{example}

\begin{proof} We claim any subgroup $H\leq G$ is essential in some direct summand of $G$. Indeed, write $H=\bigoplus_{i\in I}\langle x_i\rangle$ and let $x_i=p^{m_i}y_i$, where $m_i=\val{x_i}_G$. Then, $\langle y_i\rangle$,
$i\in I$, forms the direct sum $\bigoplus_{i\in I}\langle y_i\rangle=X$ and $H\leq X$. Moreover, assume that
$o(y_i)=p^l$, where $l<n$ for some $i\in I$, and $l\in \mathbb{Z}$. Thus, $$p^{l-1}y_i\in G[p]=p^{n-1}G[p].$$ So, $p^{l-1}y_i=p^{n-1}z$ for some $z\in G$, whence $y_i-p^{n-l}z\in G[p^{l-1}]$. However, because $l<n$, we deduce $G[p^{l-1}]\leq pG$, i.e., $y_i-p^{n-l}z\in pG$, $\val{y_i}_G>0$ and $\val{x_i}_G>m_i$, a contradiction. This means that $X$ is a direct sum of cyclic groups of order exactly $p^n$. We, therefore, infer that $X$ is a direct summand in $G$ (see \cite[Proposition~27.1]{F1}), as expected.
\end{proof}

The following technicality demonstrates that in the class of B+E-groups the concepts of being generalized Bassian and semi-generalized Bassian do coincide.

\begin{proposition}\label{1} Let $G$ be a {\rm {B+E}}-group. Then $G$ is generalized Bassian if, and only if, it is semi-generalized Bassian.
\end{proposition}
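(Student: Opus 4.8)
The plan is to prove the two implications separately, the forward one being immediate and the converse carrying all the content. The implication ``generalized Bassian $\Rightarrow$ semi-generalized Bassian'' holds for every group and was already recorded in the Introduction: if the embedding forces $N$ to be a direct summand, then a fortiori $N$ is essential in the summand $N$ itself. So I only need to treat a {\rm B+E}-group $G = B \oplus E$, with $B$ Bassian and $E$ elementary, that is semi-generalized Bassian, and show it is generalized Bassian. Fix an injection $\varphi\colon G \hookrightarrow G/N$. Being semi-generalized Bassian, $G$ yields a decomposition $G = S \oplus T$ with $N$ essential in $S$. The goal is to prove that $N$ is actually a direct summand, and a routine modular-law argument shows that, since $N$ is essential in the summand $S$, the subgroup $N$ is a summand of $G$ if and only if $N=S$; so the whole problem reduces to proving $S/N = 0$.

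First I would pin down the shape of $S$. Essentiality of $N$ in $S$ means every nonzero element of $S$ has a nonzero multiple in $N$, so $S/N$ is torsion and $G/N \cong (S/N)\oplus T$. Comparing torsion-free ranks across $\varphi$ and using that a {\rm B+E}-group has finite torsion-free rank (the cited fact from \cite{DK}) forces the torsion-free rank of $S$ to vanish, i.e. $S$ is torsion. Passing to $q$-primary components for each prime $q$ gives $G_q = S_q \oplus T_q$ and $N_q = N \cap S_q$, with $N_q$ essential in $S_q$; in the torsion setting this says precisely that $N_q \supseteq S_q[q]$, the socle. The key structural input is the characterization of Bassian groups from \cite{CDG1}: each primary component of $B$ is finite, whence $qG_q$ is finite for every $q$, each $S_q$ is a bounded direct sum of cyclics $F_q \oplus V_q$ with $F_q$ finite (the summands of order $\geq q^2$) and $V_q$ elementary, and the problematic primes are only those with $F_q\neq 0$ (when $S_q$ is elementary, essentiality already forces $N_q=S_q$). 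Since $V_q \subseteq S_q[q]\subseteq N_q$, the quotient $S_q/N_q$ is a quotient of the finite group $F_q$ by a subgroup still containing its socle.

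The hard part will be the last step: extracting enough numerical data from the mere existence of $\varphi$ and playing it against essentiality. From injectivity one gets, for every prime $q$ and every $k \geq 1$, the Ulm-type inequalities $\dim_{\mathbb{F}_q}(q^k G_q)[q] \leq \dim_{\mathbb{F}_q}(q^k(G_q/N_q))[q]$; after cancelling the common finite contribution of $T_q$ these read $L(S_q)\leq L(S_q/N_q)$, where $L(A):=\sum_{k\geq1}\dim_{\mathbb{F}_q}(q^kA)[q]$ is finite precisely because $qG_q$ is finite. On the other hand, the containment $N_q \supseteq S_q[q]$ gives an isomorphism $F_q/N_q' \cong qF_q/qN_q'$ (with $N_q'$ the image of $N_q$ in $F_q$), which together with the identity $L(F_q)=\log_q|F_q|-\dim_{\mathbb{F}_q}F_q[q]$ yields the \emph{strict} inequality $L(S_q)=L(F_q) > L(F_q/N_q')=L(S_q/N_q)$ as soon as $N_q \subsetneq S_q$. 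These two inequalities are incompatible, so $N_q = S_q$ for every $q$, hence $S/N = 0$ and $N=S$ is a direct summand, as required. I expect this socle-counting step to be the main obstacle, since one must distill just enough length/Ulm information from the abstract embedding and control it uniformly in $q$ — which is exactly where the finiteness of the primary components of the Bassian part $B$ is indispensable.
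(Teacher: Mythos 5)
Your proof is correct, but it follows a genuinely different route from the paper's. The paper settles the nontrivial direction almost immediately by quoting \cite[Lemma 3.7]{DK}: for a B+E-group, the mere existence of an injection $G\to G/N$ already forces $N$ to be elementary and a direct summand of $T(G)$; then, if $N$ is essential in a summand $A$ of $G$, essentiality makes $A$ torsion, so $A\subseteq T(G)$, and writing $T(G)=N\oplus M$ the modular law gives $A=N\oplus(A\cap M)=N$, whence $N$ itself is a summand of $G$. You never invoke that lemma. Instead you rebuild its conclusion from more elementary inputs: the characterization of Bassian groups from \cite{CDG1} (finite primary components, hence $qG_q=qB_q$ finite), additivity of torsion-free rank under the injection (to see that the essential summand $S$ is torsion), the prime-by-prime identification of essentiality with the socle containment $N_q\supseteq S_q[q]$, and finally the two incompatible estimates: $L(S_q)\le L(S_q/N_q)$, obtained from the injection after cancelling the contribution of $T_q$ (legitimate, since $q^kT_q\subseteq qG_q$ is finite for $k\ge 1$), versus $L(S_q/N_q)<L(S_q)$ whenever $N_q\subsetneq S_q$, which follows from $S_q/N_q\cong F_q/N_q'\cong qF_q/qN_q'$ (with $N_q'=F_q\cap N_q\supseteq F_q[q]$), the identity $L(F)=\log_q|F|-\dim F[q]=\log_q|qF|$, and the fact that a nonzero finite $q$-group has nonzero socle. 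I checked these steps and they are all sound, so the contradiction indeed yields $N_q=S_q$ for every $q$ and hence $N=S$ is a summand. The trade-off is clear: the paper's argument is three lines but rests entirely on the external (and still unpublished) \cite{DK} lemma, with essentiality entering only at the very end; your argument is longer but self-contained modulo the Bassian characterization, uses essentiality from the outset, and in effect re-proves exactly the special case of \cite[Lemma 3.7]{DK} that the proposition needs.
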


\begin{proof} Certainly, by definition, if $G$ is generalized Bassian, then it is semi-generalized-Bassian. As for the reverse implication, suppose that $G$ is semi-generalized Bassian. If the map $G\to G/N$ is injective for some $N\leq G$, then in view of \cite[Lemma 3.7]{DK}, the subgroup $N$ must be elementary and a direct summand of $T(G)$, the torsion subgroup of $G$. Furthermore, if we assume that $N$ is essential in some direct summand $A$ of $G$, it quickly follows that $A$ is contained in $T(G)$, so that $A$ must actually equal to $N$. This means that $N$ is, in turn, a direct summand of $G$, and hence $G$ is generalized Bassian, as wanted.
\end{proof}

We now continue with the following technical statement which is pivotal for proving the direct summand property of semi-generalized Bassian groups.

\begin{lemma}\label{summandlemma} Suppose $G$ is a mixed group with torsion $T$. Let $A$ be a summand of $G$ containing $N$ as an essential subgroup. If $G=H\oplus K$ is a decomposition with $N\subseteq H$ and $\pi:G\to H$ is the standard projection, then $\pi$ restricted to $A$ is injective, and $\pi(A)$  is a summand of $G$, and of $H$.
\end{lemma}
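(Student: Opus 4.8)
The plan is to prove injectivity directly and then to manufacture an \emph{automorphism} of $G$ that carries $A$ onto $\pi(A)$; since $A$ is a summand of $G$ and automorphisms preserve summands, this will immediately make $\pi(A)$ a summand, after which a modular-law argument places it inside $H$.

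First I would observe that $\Ker(\pi|_A)=A\cap K$. Because $N\subseteq H$, we have $N\cap K\subseteq H\cap K=\{0\}$; and since $N$ is essential in $A$, every nonzero subgroup of $A$ meets $N$, so the subgroup $A\cap K$, meeting $N$ trivially, must vanish. Hence $\pi|_A$ is injective and $A\cong\pi(A)$. I would also record that $N\subseteq A\cap H$, so $A\cap H$ is essential in $A$, and that consequently $A/(A\cap H)$ is \emph{torsion} (an essential subgroup always has torsion quotient).

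Next, writing $G=A\oplus B$ with projection $\rho_A\colon G\to A$ and putting $\pi_K=\mathrm{id}_G-\pi\colon G\to K$, I would set $\beta=\pi_K\rho_A$ and $\theta=\mathrm{id}_G-\beta$. A direct check gives $\theta(a)=a-(a-\pi(a))=\pi(a)$ for $a\in A$ and $\theta(b)=b$ for $b\in B$, so $\theta(A)=\pi(A)$ and $\theta(B)=B$. The heart of the matter is that $\theta\in\Aut(G)$. For injectivity, I would compute $\Ker\beta=\rho_A^{-1}(A\cap H)=(A\cap H)\oplus B$, which is essential in $G=A\oplus B$ exactly because $A\cap H$ is essential in $A$. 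An endomorphism $\mathrm{id}-\beta$ with $\beta$ of essential kernel is injective: if $\theta(g)=0$ then $g=\beta(g)$, and for $g\neq 0$ essentiality yields some $n$ with $ng\neq 0$ and $ng\in\Ker\beta$, whence $ng=n\beta(g)=\beta(ng)=0$, a contradiction.

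The step I expect to be the main obstacle is \emph{surjectivity} of $\theta$. Here I would use that $W:=\im\beta=\pi_K(A)\cong A/(A\cap H)$ is torsion and that $\beta(G)=W$, $\beta(W)\subseteq W$. Solving $\theta(g)=g_0$ forces $g-g_0=\beta(g)\in W$, so writing $g=g_0+w$ with $w\in W$ reduces the equation to $(\mathrm{id}_W-\gamma)(w)=\beta(g_0)$ inside $W$, where $\gamma=\beta|_W$. Now $\Ker\gamma=W\cap\Ker\beta$ is essential in the torsion group $W$, which forces $\Ker\gamma$ to contain the socle $W[p]$ for every prime $p$; a short induction on the order of an element then shows $\gamma$ is locally nilpotent (indeed $\gamma^k$ annihilates each element of order $p^k$). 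Hence $\mathrm{id}_W-\gamma$ is bijective, with inverse $\sum_{i\geq 0}\gamma^i$ (a locally finite sum), so $\theta$ is onto and therefore an automorphism. Applying $\theta$ to $G=A\oplus B$ gives $G=\pi(A)\oplus B$, so $\pi(A)$ is a summand of $G$; and since $\pi(A)\subseteq H$ with $H$ a summand of $G$, the modular law yields $H=\pi(A)\oplus(H\cap B)$, which completes the proof.
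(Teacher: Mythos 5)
Your proof is correct, and it takes a genuinely different route to the key decomposition than the paper does. Both arguments open identically (injectivity of $\pi|_A$ from $N\cap K=\{0\}$ plus essentiality) and both close identically (the modular law giving $H=\pi(A)\oplus(H\cap B)$); moreover both arrive at the very same decomposition $G=\pi(A)\oplus B$ with the same complement. The difference is how that decomposition is established. The paper verifies it directly: $\pi(A)\cap B=\{0\}$ follows because any $0\ne a\in A$ has $0\ne na\in N$ and $\pi$ fixes $N$; and $\pi(A)+B=G$ is reduced, via the observation that $a-\pi(a)$ is torsion, to showing $T\subseteq\pi(A)+B$, which is then handled one prime at a time by a socle-and-heights argument --- the subgroup $G'=\pi(A)+B$ has the same socle as $G$ and is pure, hence equals $G$. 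You instead build $\theta=\mathrm{id}_G-\beta$ with $\beta=\pi_K\rho_A$, note that $\theta$ carries $A$ onto $\pi(A)$ and fixes $B$, and prove $\theta\in\Aut(G)$: injectivity because $\Ker\beta=(A\cap H)\oplus B$ is essential in $G$ (your ``essential kernel'' trick is sound), and surjectivity because $\gamma=\beta|_W$ on the torsion image $W\cong A/(A\cap H)$ has essential kernel, hence kills each socle $W[p]$, hence is locally nilpotent, so $\mathrm{id}_W-\gamma$ is inverted by the locally finite geometric series. Your route buys a cleaner, more operator-theoretic argument that avoids the reduction to $p$-groups and the purity/height machinery entirely, and it yields slightly more than the statement asks: an explicit automorphism of $G$ moving $A$ onto $\pi(A)$ while fixing the complement. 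The paper's route stays within the elementary $p$-group toolkit (socles, heights, pure subgroups) used throughout the rest of the paper; note that your local-nilpotence step still relies on primary decomposition of torsion groups, so neither argument is ring-generic, but yours localizes that reliance to a single well-isolated step.
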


\begin{proof} The kernel of $\pi$ is $K$. Clearly, $K\cap N\subseteq K\cap H=\{0\}$, and since $N$ is essential in $A$, this implies that $K\cap A=\{0\}$. This means that $\pi$ is injective on $A$.  In addition, $\pi$ is obviously the identity on $N\subseteq H$.

Let $G=A\oplus C$. We claim that there is another decomposition $G=\pi(A)\oplus C$. Notice that this would also imply that $H=\pi(A)\oplus (C\cap H)$, completing the proof.

We first show that $\pi(A)\cap C=\{0\}$: So, suppose $x\ne 0$ is in this intersection. Let $x=\pi(a)$, where $0\ne a\in A$. It follows that there is an integer $n$ such that $0\ne na\in N$. Therefore, $0\ne na =\pi(na)=nx$ will be an element of $N\cap C\subseteq A\cap C=\{0\}$, which gives our desired contradiction.

So, we will be done if we can show that $G':=\pi(A)+C=G$:  Since $G=A\oplus C$, it will suffice to show that $A\subseteq G'$. Let $a\in A$ be arbitrary. Note that $a=(a-\pi(a))+\pi(a)$, and $\pi(a)$ is clearly in $G'$. It will, therefore, suffice to show that for every $a\in A$, $a-\pi(a)\in G'$. For any such $a\in A$, there is an integers $n$ such that $na\in N$. Since $n(a-\pi(a))=na-\pi(na)=na-na=0$, we can conclude that $a-\pi(a)\in T$. Consequently, it will suffice to show that $T\subseteq G'$.

Note that $T=T_A\oplus T_C=T_H\oplus T_K$ and $T_N$ will be essential in $T_A$. Therefore, to show $T\subseteq G'$, it suffices to assume $G$ and all of these groups are torsion. And from that, it is clear that all of these considerations can be handled one prime at a time, so we may assume that $G$ and all of these are $p$-groups, which we do.

Since $\pi(A)\cap C=\{0\}$, we can conclude that
$$
         G[p]=A[p]\oplus C[p]=N[p]\oplus C[p]=\pi(A)[p]\oplus C[p]=G'[p].
$$
And if $x\in G[p]$ equals $a+c$, for $a\in A[p]$, $c\in C[p]$, then
$$
      \val x_G=\min\{\val a_A, \val c_C\}\leq  \min\{\val a_{\pi(A)}, \val c_C\}= \val x_{G'}\leq \val x_G,
$$
which implies that $G'$ is pure in $G$. So, since they have the same socles, they must be equal, as required.
\end{proof}

We now arrive at our first chief result which expand the corresponding closeness of the direct summand property for both Bassian and generalized Bassian groups (compare with \cite{CDG1} and \cite{CDG2}).

\begin{theorem}\label{summands} The class of semi-generalized Bassian groups is closed under direct summands.
\end{theorem}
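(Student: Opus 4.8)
The plan is to establish the statement in the form: if $G$ is semi-generalized Bassian and $G=G_1\oplus G_2$, then $G_1$ is semi-generalized Bassian. So I would fix a subgroup $H\leq G_1$ together with an injective homomorphism $\varphi:G_1\to G_1/H$, and the goal is to exhibit a direct summand of $G_1$ in which $H$ is essential.

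First I would lift the given injection on $G_1$ to an injection defined on all of $G$. Since $H\subseteq G_1$, there is a canonical isomorphism $G/H\cong (G_1/H)\oplus G_2$, and the map $\Psi:=\varphi\oplus \mathrm{id}_{G_2}:G_1\oplus G_2\to (G_1/H)\oplus G_2$ is injective precisely because $\varphi$ is. Composing $\Psi$ with the above isomorphism produces an injective homomorphism $G\to G/H$. Since $G$ is semi-generalized Bassian by hypothesis, the subgroup $H$ must be essential in some direct summand $A$ of $G$.

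Next I would \emph{descend} $A$ back into the summand $G_1$ by invoking Lemma~\ref{summandlemma}. I apply that lemma to the decomposition $G=G_1\oplus G_2$ (in the roles of $H\oplus K$ there), taking $H$ as the essential subgroup $N$ and letting $\pi:G\to G_1$ be the standard projection; note $H=N\subseteq G_1$ as required. The lemma then yields that $\pi$ is injective on $A$ and that $\pi(A)$ is a direct summand of $G_1$. Moreover, $\pi$ restricts to the identity on $H\subseteq G_1$, so $H=\pi(H)\subseteq \pi(A)$. Finally, since $\pi|_A:A\to\pi(A)$ is an isomorphism carrying $H$ onto $H$, and isomorphisms preserve essentiality, $H$ is essential in $\pi(A)$. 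As $\pi(A)$ is a summand of $G_1$, this is exactly what is needed, and $G_1$ is semi-generalized Bassian.

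I expect the genuine work to be already packaged inside Lemma~\ref{summandlemma} — namely, the fact that projecting $A$ along the chosen decomposition both preserves injectivity on $A$ and lands on an honest summand \emph{of} $G_1$ (and not merely of $G$), which in the lemma rests on the socle/purity comparison. The remaining points to handle carefully are (a) the correct identification $G/H\cong (G_1/H)\oplus G_2$ used to build the lifted injection $\Psi$, and (b) the observation that, although Lemma~\ref{summandlemma} is phrased for a mixed group $G$, its argument applies verbatim to the torsion and torsion-free cases as well (when $T=0$ the final socle step is vacuous), so the conclusion covers every $G$.
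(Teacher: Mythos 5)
Your proposal is correct and follows essentially the same route as the paper's own proof: extend the given injection to all of $G$ by the identity on the complementary summand, invoke the semi-generalized Bassian property of $G$ to obtain a summand $A$ containing the subgroup essentially, and then apply Lemma~\ref{summandlemma} to project $A$ down to a summand of $G_1$. Your two added points of care --- that $\pi|_A$ is an isomorphism carrying the subgroup onto itself (so essentiality is preserved) and that the lemma's argument works whether or not $G$ is genuinely mixed --- are details the paper leaves implicit, but they do not change the argument.
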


\begin{proof} Suppose $G$ is a semi-generalized Bassian group and $H$ is a summand of $G$; let $\pi:G\to H$ be the usual projection. Suppose $N$ is a subgroup of $H$ and $\phi:H\to H/N$ is an injective homomorphism. Extending $\phi$ to $G\to G/N$ by letting it equal the identity on a complementary summand of $H$, it remains an injective homomorphism. Therefore, $N$ is contained as an essential subgroup in some summand, say $A$, of $G$. An application of Lemma~\ref{summandlemma} guarantees that $\pi(A)$ will also be a summand of $H$ containing $\pi(N)=N$ as an essential subgroup, thereby proving the result.
\end{proof}

Our next result states as follows.

\begin{proposition}\label{4} A reduced torsion-free group $G$ is semi-generalized Bassian if, and only if, $G$ is Bassian or, equivalently, if, and only if, $G$ has finite rank.
\end{proposition}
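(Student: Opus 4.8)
The plan is to run a short cycle of implications. That a Bassian group is semi-generalized Bassian is immediate from the definitions, and for reduced torsion-free groups the equivalence of being Bassian with having finite rank is essentially free: if $\rk(G)=r<\aln$ and $\phi\colon G\hookrightarrow G/N$ is injective, then $r\le\rk(G/N)=r-\rk(N)$ forces $\rk(N)=0$, hence $N=0$ by torsion-freeness, so $G$ is Bassian, while the converse (an infinite-rank torsion-free group admits a non-Bassian witness) is part of the characterization of Bassian groups in \cite{CDG1}. The entire weight of the statement therefore rests on the implication \emph{semi-generalized Bassian $\Rightarrow$ finite rank}, which I would prove in the contrapositive form: \emph{a reduced torsion-free group $G$ of infinite rank is not semi-generalized Bassian}.

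The conceptual core is a purity reduction valid in any torsion-free group. If a subgroup $N$ is essential in a direct summand $A$, then $A/N$ is torsion, so $A$ is contained in the pure closure $N_*$; conversely $A$, being a summand, is pure and contains $N$, so $N_*\subseteq A$, whence $A=N_*$. Thus ``$N$ is essential in a direct summand'' is equivalent to ``$N_*$ is a direct summand of $G$''. Consequently $G$ fails to be semi-generalized Bassian precisely when there is a subgroup $N$ admitting an injection $G\hookrightarrow G/N$ whose pure closure $N_*$ is \emph{not} a direct summand, and producing such an $N$ is the whole task.

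I would first dispose of the transparent case and then reduce the general one to it. When $G$ has a countable free direct summand --- in particular for $G=\Z^{(\aln)}$ --- one maps $G$ onto the countable torsion-free non-free group $\Z^{(\aln)}\oplus\Z_{(p)}$; the kernel $N$ is pure (the target is torsion-free) and is not a summand (the target is not projective), while $G$ embeds into the target through its first factor, so $N$ is the desired witness. For arbitrary $G$ I would exploit the hypothesis: assuming $G$ is semi-generalized Bassian, a non-Bassian witness yields $G\hookrightarrow G/N$ with $N\ne0$; by the purity reduction $N_*=:A_1$ is then a nonzero summand $G=A_1\oplus M_1$, and composing $G\hookrightarrow G/N$ with $G/N\twoheadrightarrow G/A_1\cong M_1$ gives an injection $G\hookrightarrow M_1$, since its kernel is carried into the torsion group $A_1/N$ and $G$ is torsion-free. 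By Theorem~\ref{summands} the proper summand $M_1$ is again semi-generalized Bassian, so the process repeats: either it eventually embeds $G$ into a finite-rank summand, contradicting $\rk(G)\ge\aln$ outright, or it manufactures an injective, non-surjective endomorphism $\Phi$ whose iterated images $\Phi^{\,i}(A_1)$ form an infinite independent family of nonzero subgroups, on which a witness of the same telescoping type can be built.

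The main obstacle is exactly this final construction carried out uniformly. Exhibiting the injection $G\hookrightarrow G/N$ and the non-splitting of $N_*$ simultaneously is painless when $G$ has many endomorphisms or an explicit infinite direct-sum decomposition, but genuinely delicate for rigid or indecomposable groups, where homomorphisms are scarce and no ambient target of the form $G\oplus R$ is available a priori. The crux is to convert the structural consequence of the hypothesis --- that $G$ embeds into one of its proper summands --- into an honestly non-split pure subgroup with a large quotient; concretely, one would take elements $a_i$ of the successive subgroups and verify that the telescoping subgroup $\langle\,a_i-p\,a_{i+1}:i\ge1\,\rangle$ has pure closure that fails to be a summand while $G$ still embeds into the quotient. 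Making this verification go through in full generality is, I expect, the technical heart of the proof.
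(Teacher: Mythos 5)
Your framework is sound: the cycle of implications is the right structure, the rank-counting argument for ``finite rank $\Rightarrow$ Bassian'' is correct, the purity reduction (in a torsion-free group, $N$ is essential in a direct summand if and only if its pure closure $N_*$ is a summand) is true and useful, and your treatment of the case where $G$ has a countable free direct summand is complete. But the proof has a genuine gap exactly where you flag one: the general case, which is the entire content of the proposition. Your iteration produces $G=A_1\oplus M_1$ with an injection $\Phi\colon G\hookrightarrow M_1$, hence the independent family $\Phi^i(A_1)$; note that the first horn of your dichotomy can never occur, since every $M_k$ contains a copy of $G$ and so has infinite rank, so everything rests on the telescoping witness $N=\langle a_i-pa_{i+1}:i\ge 1\rangle$. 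For that $N$ you must verify two things, and neither is addressed: that $N_*$ fails to be a summand, and --- more seriously --- that $G$ embeds in $G/N$ at all. The telescoping quotient $\langle a_i:i\ge 1\rangle/N\cong \Z[1/p]$ has no evident relation to $G$, and for a reduced torsion-free group with few homomorphisms and no free summands (for instance the Baer--Specker group $\prod_{\aleph_0}\Z$, or large indecomposable groups) there is no apparent source for such an injection. Since you explicitly defer this verification, the central implication ``semi-generalized Bassian $\Rightarrow$ finite rank'' remains unproven.

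The missing idea, which is how the paper does it, is to aim the injection at the divisible hull instead of at a telescoping quotient. Let $M$ be a maximal independent subset of $G$, so that $F=\langle M\rangle$ is free of rank $\mathrm{rk}(G)=|G|$ (equality because the rank is infinite). Since $|D(G)|=|G|$, there is a subgroup $X\le F$ with $F/X\cong D(G)$, and then
$$G\hookrightarrow D(G)\cong F/X\le G/X$$
is the required injection, uniformly for every infinite-rank $G$, with no case analysis and no need for endomorphisms of $G$. If $X$ were essential in a summand $U$, write $G=U\oplus V$; then $G/X=U/X\oplus V$, where $U/X$ is torsion (essentiality in the torsion-free setting) and $V$ is reduced (a summand of the reduced group $G$), so $G/X$ contains no nonzero torsion-free divisible subgroup --- contradicting $D(G)\hookrightarrow G/X$. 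This single construction does simultaneously the two jobs your telescope was supposed to do, and it removes precisely the obstacle you correctly identified, namely rigid or indecomposable groups where homomorphisms are scarce.
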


\begin{proof} Assume the contrary that $G$ has infinite rank and that $M$ is a maximal independent subset of $G$. We, thus, have that $F=\langle M\rangle$ is a free group of infinite rank equal to $|G|$. Hence, there exists a subgroup
$X$ of $F$ with $F/X\cong D(G)$, where $D(G)$ is a divisible hull of $G$. So, $G$ then embeds in $G/X$. If we write $G=U\oplus V$, where $U/H$ is torsion, then $G/H=U/H\oplus V$ will not have a subgroup isomorphic to $D(G)$, as needed.
\end{proof}

As a consequence, we now continue by considering the behavior of the cartesian direct sum of a semi-generalized Bassian group without torsion.

\begin{corollary}\label{3} The following two statements are true:

(1) If $A$ is a torsion-free group and $\alpha$ is finite, then $G=\bigoplus_{\alpha} A$ is semi-generalized Bassian if, and only if, so does $A$.

(2) If $A$ is a torsion-free group and $\alpha$ is infinite, then $G=\bigoplus_\alpha A$ is semi-generalized Bassian if, and only if, $A$ is divisible.

\end{corollary}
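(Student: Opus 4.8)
The plan is to dispatch the two ``trivial'' implications with the closure and divisibility facts already in hand, and to concentrate the real work on the converse of part~(1). In both statements $A$ is a direct summand of $G=\bigoplus_\alpha A$ (as $\alpha\geq 1$), so Theorem~\ref{summands} instantly yields the forward implication of~(1) and one direction of~(2): if $G$ is semi-generalized Bassian, so is $A$. For the reverse implication of~(2), if $A$ is divisible then $G=\bigoplus_\alpha A$ is again divisible, hence semi-generalized Bassian by Example~\ref{(1)}.

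The substantive half of~(2) is the remaining forward direction, that $G$ semi-generalized Bassian with $\alpha$ infinite forces $A$ divisible. Here I would split off the divisible part, $A=D(A)\oplus R$ with $R$ reduced torsion-free; since $R$ is a summand of the semi-generalized Bassian group $A$, Theorem~\ref{summands} makes $R$ semi-generalized Bassian, and Proposition~\ref{4} then makes $R$ of finite rank. Now $G=\bigl(\bigoplus_\alpha D(A)\bigr)\oplus\bigl(\bigoplus_\alpha R\bigr)$, and the reduced torsion-free summand $\bigoplus_\alpha R$ has rank $\alpha\cdot\rk(R)$. If $R\neq 0$ this rank is infinite, so by Theorem~\ref{summands} and Proposition~\ref{4} this summand would be a reduced torsion-free semi-generalized Bassian group of infinite rank, a contradiction. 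Hence $R=0$ and $A=D(A)$ is divisible.

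For the converse of part~(1) I would again write $A=D(A)\oplus R$ with $R$ of finite rank by the same summand argument, so $G=A^{n}=D(A)^{n}\oplus R^{n}$ with $D(A)^{n}$ divisible torsion-free and $R^{n}$ reduced torsion-free of finite rank. If $D(A)$ has finite rank, then $G$ has finite torsion-free rank, and any injection $G\to G/N$ forces $\rk(N)=0$, hence $N=0$ by torsion-freeness; thus $G$ is Bassian and a fortiori semi-generalized Bassian. The genuinely harder case is $D(A)$ of infinite rank. Mechanically, for an injection $G\to G/N$ the purification $N_{*}$ contains $N$ essentially, its divisible part can be split off against $D(A)^{n}$, and one is reduced to showing that a finite-rank pure subgroup of $R^{n}$ is a direct summand; injectivity of the divisible summand lets one straighten $N_{*}$ into such a form. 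This last step works precisely when $R^{n}$ is homogeneous completely decomposable, so what is really required is the full torsion-free classification, namely that a semi-generalized Bassian torsion-free group of infinite rank has the form $D\oplus R$ with $R$ finite-rank homogeneous completely decomposable (Theorem~\ref{5}). Granting this, $R$ is homogeneous completely decomposable, hence so is $R^{n}$ (same type, still finite rank), while $D(A)^{n}$ remains divisible; thus $G=D(A)^{n}\oplus R^{n}$ again has exactly the form that guarantees the semi-generalized Bassian property by Theorem~\ref{5}.

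The main obstacle is this infinite-rank case: one cannot conclude merely from $A$ being semi-generalized Bassian and $R$ being finite-rank reduced, because $D\oplus R$ with $D$ divisible of infinite rank is semi-generalized Bassian \emph{only} when $R$ is homogeneous completely decomposable. The crux is therefore to extract the homogeneous completely decomposable structure of $R$ from the hypothesis on $A$ (i.e. to invoke Theorem~\ref{5}) and then to observe that the two defining shapes --- ``finite rank'' and ``divisible $\oplus$ finite-rank homogeneous completely decomposable'' --- are manifestly stable under passage to a finite direct power, which is exactly what makes the converse of~(1) succeed.
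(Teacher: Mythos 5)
Your proof is correct, and it supplies substantially more detail than the paper itself, whose entire proof of this corollary consists of declaring (1) ``straightforward'' and (2) ``a routine consequence of Theorem~\ref{5}''. The interesting difference is where you place the weight. For the forward direction of (2) you avoid Theorem~\ref{5} entirely: splitting $A=D(A)\oplus R$, observing that $\bigoplus_\alpha R$ is a reduced torsion-free summand of $G$ of infinite rank whenever $R\neq 0$, and getting a contradiction from Theorem~\ref{summands} plus Proposition~\ref{4}. That is more elementary than the paper's intended route and uses only results stated before the corollary. Conversely, for the backward direction of (1) --- precisely the part the paper dismisses as straightforward --- you correctly identify that Theorem~\ref{5} is genuinely needed when $D(A)$ has infinite rank: one must know the reduced part of $A$ is homogeneous fully decomposable, because homogeneity and finite rank are stable under finite powers, whereas the class of torsion-free semi-generalized Bassian groups is \emph{not} closed under arbitrary finite direct sums (the direct sum of an infinite-rank divisible group and a non-homogeneous finite-rank completely decomposable group fails condition (2) of Theorem~\ref{5}, even though each summand is semi-generalized Bassian). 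Your mid-proof ``mechanical'' sketch of straightening the purification against the divisible summand is dispensable, since you ultimately invoke the sufficiency direction of Theorem~\ref{5}; and citing the later Theorem~\ref{5} is harmless here, as its proof does not depend on this corollary --- indeed the paper's own proof does the same.
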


\begin{proof} (1) is straightforward, so we omit the details.

(2) is a routine consequence of the next Theorem~\ref{5}, so we leave it for inspection by the interested reader.

\end{proof}

A concrete example of such a group $A$ is an arbitrary indecomposable torsion-free group of finite rank $>1$.

\medskip

We are now prepared to prove the following.


\begin{theorem}\label{5}
If $G$ is a torsion-free group, then $G$ is semi-generalized Bassian if, and only if, one of the following two conditions holds:

(1) $G$ has finite rank;

(2) $G\cong D\oplus R$, where $D$ is divisible (of infinite rank) and $R$ is a finite rank fully decomposable homogeneous group.
\end{theorem}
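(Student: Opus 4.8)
The plan is to treat the two implications separately and to route both through the following pivotal reformulation: \emph{for a reduced torsion-free group $R$ of finite rank, every pure subgroup of $R$ is a direct summand of $R$ if, and only if, $R$ is homogeneous completely decomposable.} The forward direction of this equivalence (homogeneous completely decomposable $\Rightarrow$ pure subgroups split off) is the classical fact that finite-rank pure subgroups of homogeneous completely decomposable groups are summands (cf. \cite{F2}); the reverse direction I would establish by hand, as indicated below.

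For the sufficiency of (1): if $G$ is torsion-free of finite rank and $G\hookrightarrow G/H$, then additivity of torsion-free rank in $0\to H\to G\to G/H\to 0$ gives $\rk(G)\leq\rk(G/H)=\rk(G)-\rk(H)$, forcing $\rk(H)=0$ and hence $H=\{0\}$. So $G$ is even Bassian, thus semi-generalized Bassian. For the sufficiency of (2), I would prove the stronger assertion that \emph{every} subgroup of $G=D\oplus R$ is essential in a summand; since a subgroup is always essential in its purification, it suffices to show every pure subgroup $P$ of $G$ is a direct summand. I would first note that $P\cap D$ is the maximal divisible subgroup of $P$, so it splits off both $D$ and $P$: write $D=(P\cap D)\oplus D'$ and $P=(P\cap D)\oplus P_r$ with $P_r$ reduced. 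The projection $\rho\colon G\to R$ is injective on $P_r$, and using divisibility of $D$ one checks that $Q:=\rho(P_r)$ is pure in $R$; by the classical fact, $R=Q\oplus R'$. Regarding $P_r$ as the graph of a homomorphism $Q\to D$, a direct verification then yields $G=P\oplus(D'\oplus R')$, as required. (Confirming that $Q$ is pure in $R$ is one of the two points needing care.)

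For the necessity, write $G=D\oplus R$ with $D$ the maximal divisible subgroup (a $\Q$-vector space) and $R$ reduced. As $R$ is a summand, Theorem~\ref{summands} makes it semi-generalized Bassian, and being reduced torsion-free it has finite rank by Proposition~\ref{4}. If $\rk(G)<\infty$ we are in case (1); otherwise $\rk(D)=\infty$. The engine is the following embedding trick: for any $H\leq R$ we have $G/H=D\oplus(R/H)$, and since the divisible hull $\widehat G=D\oplus\widehat R$ is a $\Q$-vector space of the same infinite dimension as $D$, there is an embedding $G\hookrightarrow\widehat G\cong D\leq G/H$. Hence $G\hookrightarrow G/H$, so semi-generalized Bassianness forces $H$ to be essential in some summand of $G$. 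Taking $H$ \emph{pure} in $R$ (hence pure in $G$), and noting that a pure subgroup essential in a summand must equal that summand (the quotient being both torsion and torsion-free), we conclude $H$ is a direct summand of $G$, and therefore of $R$. Thus every pure subgroup of $R$ splits off.

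It remains to deduce that such an $R$ is homogeneous completely decomposable, which I expect to be the main obstacle. Complete decomposability follows by induction on $\rk(R)$: a pure rank-one subgroup splits off, and the complement again has all pure subgroups splitting. For homogeneity, suppose some rank-two summand $A_1\oplus A_2\leq R$ has distinct types $\mathbf s\neq\mathbf t$; it inherits the ``pure subgroups split'' property. If $\mathbf s,\mathbf t$ are incomparable, the pure closure of a diagonal element $a_1+a_2$ has type $\mathbf s\wedge\mathbf t$ strictly below both, contradicting the invariance of the set of types of a completely decomposable group. If $\mathbf s<\mathbf t$, then (as $R$ is reduced) some prime $q_0$ has $\mathbf t_{q_0}<\infty$; choosing $a_1$ with $q_0$-height $\geq 1$ and $a_2$ with $q_0$-height $0$, the pure closure $P=\la a_1+a_2\Rs_*$ projects onto a \emph{proper} subgroup of $A_1$, so $P$ cannot be complemented by the forced type-$\mathbf t$ summand $A_2$, contradicting that $P$ splits off. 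Hence $R$ is homogeneous, placing $G$ in case (2). The delicate points are precisely the height-bookkeeping in this final construction and the purity of $Q$ above; everything else is routine.
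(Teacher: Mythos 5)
Your proposal is correct, and its skeleton matches the paper's: necessity via Theorem~\ref{summands} and Proposition~\ref{4} to get the reduced part $R$ of finite rank, then an injection $G\hookrightarrow G/H$ manufactured through the infinite-rank divisible part (the paper leaves this as ``apparently, there exists an injection''; you spell it out via the divisible hull $\widehat G\cong D$), concluding that pure subgroups of $R$ split off; sufficiency via showing every subgroup is essential in a summand. You diverge in two substantive ways, both to your credit. First, the paper disposes of the pivotal implication ``every pure subgroup of the reduced finite-rank group $R$ is a summand $\Rightarrow$ $R$ is homogeneous completely decomposable'' by citing \cite[\S 86, Exercise 7]{F1} and \cite{FKS}, whereas you prove it by hand; your argument survives scrutiny: induction gives complete decomposability, the incomparable-type case is killed by the invariance of types, and in the case $\mathbf s<\mathbf t$ the height bookkeeping works out --- any rational $r$ with $ra_2\in A_2$ has nonnegative $q_0$-value (as $a_2$ has $q_0$-height $0$), so every element of the projection of $P=\langle a_1+a_2\rangle_*$ into $A_1$ has $q_0$-height at least $1$, while any complement $C\cong (A_1\oplus A_2)/P$ contains a copy of $A_2$, hence has type $\geq\mathbf t$ and projects to $0$ in $A_1$, so $P\oplus C$ cannot exhaust $A_1\oplus A_2$. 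Second, for sufficiency in case (2) the paper splits a given $H$ by means of an $(H\cap D)$-high subgroup and purifications, whereas you pass to the purification $P=H_*$, split off $P\cap D$ (your identification of it as the maximal divisible subgroup of $P$ is valid precisely because everything is torsion-free and $R$ is reduced), and realize the reduced complement as the graph of a homomorphism from a pure --- hence split --- subgroup $Q\leq R$ into $D$; the purity of $Q$ does hold, by the check you indicate. The trade-off: the paper's route is shorter because it outsources the structural step to the literature; yours is self-contained and actually establishes the stronger conclusion that \emph{every} subgroup of $D\oplus R$ is essential in a direct summand, which is exactly the torsion-free content of the paper's later Theorem~\ref{65}. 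One small point of hygiene: in case (2) you should reduce first to the situation where $D$ is the maximal divisible subgroup of $G$ (if $R$ itself had divisible type, $G$ would be divisible and Example~\ref{(1)} applies), since your graph argument uses that $R$ is reduced.
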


\begin{proof} "{\bf Necessity}". Applying a combination of Theorem~\ref{summands} and Proposition~\ref{4}, it follows at once that the reduced part $R$ has finite rank.

Let us now the rank of $D$ is infinite and choose $\{0\}\neq X\leq R$. Then, apparently, there exists an injection
$G\to G/X$ and so $X$ is essential in some direct summand of $R$, that is, the purification $X_*$ of $X$ is its direct summand. Therefore, in virtue of \cite[\S 86, Exersize 7]{F1}, the group $R$ is a fully decomposable homogeneous group (see also \cite{FKS}).

\medskip

"{\bf Sufficiency}". If $D$ has finite rank, then the group $G$ is Bassian. Assume now that the rank of $D$ is infinite, that $\{0\}\neq H\leq G$ and that there exists an injection $G\to G/H$. If $H\cap D=\{0\}$, then $H$
is contained in a reduced part $R$ of $G$ (note that $R$ is defined up to an isomorphism) and so $X_*$ is a direct summand of $R$. Assume also that $H'=H\cap D\neq\{0\}$, and suppose $H''$ be a $H'$-hight subgroup in $H$. It is well known that $H'\oplus H''$ is essential in $H$ (cf. \cite[\S 10, Exersize 6]{F1}). However, the subgroup $H''$
is contained in a reduced part $R$ of $G$. Consequently, a simple inspection shows that $H_*'\oplus H_*''$ is a direct summand of $G$ and, clearly, that $H\leq H_*'\oplus H_*''$ is an essential subgroup, as required.
\end{proof}

We shall now deal with $p$-groups. And so, we have now all the ingredients to establish the following surprising assertion.

\begin{theorem}\label{6} The reduced $p$-group $G$ has the property that every subgroup is an essential subgroup of a summand of $G$ if, and only if, for some positive integer $n$, we have
$$
G\cong \left (\bigoplus_I \mathbb Z_{p^n}\right) \oplus \left (\bigoplus_J \mathbb Z_{p^{n+1}}\right);
$$
that is, $f_G(\alpha)=0$ unless $\alpha=n-1,n$.
In particular, every such a group $G$ is semi-generalized Bassian.
\end{theorem}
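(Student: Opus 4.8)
The plan is to reduce everything to the socle. For $p$-groups one has the elementary fact that, given $H\subseteq A$, the subgroup $H$ is essential in $A$ precisely when $H[p]=A[p]$: if a socle element of $A$ lay outside $H$ it would generate an order-$p$ subgroup meeting $H$ trivially. Thus the stated property is equivalent to saying that \emph{for every subgroup $H$ there is a direct summand $A$ of $G$ with $H\subseteq A$ and $A[p]=H[p]$}. I would record at the outset one structural lemma about the target groups $G=\bigoplus_I\mathbb Z_{p^n}\oplus\bigoplus_J\mathbb Z_{p^{n+1}}$, namely that \emph{every element has a gapless height sequence}, i.e. $\val{px}_G=\val{x}_G+1$ whenever $px\neq0$. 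This follows by writing a height-$0$ element as $y=\sum a_kc_k$ over a cyclic basis (orders $p^n,p^{n+1}$) and checking—here the two \emph{consecutive} orders are essential—that the order of $y$ is already attained on the unit-coefficient summands, so that at every level below the order some such summand survives and realizes the minimal height.

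For necessity I would argue contrapositively. If some $w$ had a genuine gap, say $\val{pw}_G>\val{w}_G+1$ (after replacing $w$ by a suitable $p^jw$), then $\langle w\rangle$ cannot be essential in any summand: its socle is one-dimensional, forcing the housing summand $A$ to be cyclic and pure, whereas purity would give $\val{pw}_A=\val{w}_G+1$, contradicting $\val{pw}_G=\val{pw}_A$. Hence the property forces every element to be gapless. If two socle heights $s<t$ with $t\ge s+2$ occurred, I would lift socle generators $a=p^s\alpha$, $b=p^t\beta$ (with $\alpha,\beta$ of height $0$, necessarily of orders $p^{s+1},p^{t+1}$ by gaplessness) and observe that $v:=a+p^{s+1}\beta$ satisfies $\val{v}_G=s$ but $\val{pv}_G=\val{p^{s+2}\beta}_G=s+2$, a gap; contradiction. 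Therefore the nonzero Ulm invariants lie in a window of diameter $\le1$, so $f_G(\alpha)=0$ unless $\alpha\in\{n-1,n\}$. Finally $p^{n+1}G[p]=0$ yields $p^{n+1}G=0$, so $G$ is bounded, hence a direct sum of cyclics by Pr\"ufer's theorem, and the socle-height restriction pins the orders to $p^n$ and $p^{n+1}$.

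For sufficiency I would, given $H\le G$, produce $A$ by a pure-independent lifting generalizing Example~\ref{(2)}. Since $G$ is bounded, $H$ is a direct sum of cyclics; the point is to choose the decomposition $H=\bigoplus_i\langle x_i\rangle$ so that the socles $s_i=\langle x_i\rangle[p]$ form a basis of $H[p]$ that is simultaneously adapted to the intrinsic height filtration of $H$ (so it is realizable as the socle set of a genuine cyclic decomposition) and to the single-step $G$-filtration $H[p]\supseteq H\cap p^nG$ (so the socles are $G$-valuated independent). Lifting each $x_i$ to a height-$0$ element $y_i$ with $x_i=p^{m_i}y_i$, the gapless lemma makes each $\langle y_i\rangle$ a pure cyclic; independence of the $s_i$ makes the sum direct with $\bigl(\bigoplus_i\langle y_i\rangle\bigr)[p]=H[p]$; and $G$-valuated independence upgrades this to pure independence, so that $A:=\bigoplus_i\langle y_i\rangle$ is a bounded pure, hence direct, summand containing $H$ with $A[p]=H[p]$.

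The main obstacle is exactly this simultaneous adaptation: a naive lift can fail, since two socle elements of $G$-height $n-1$ may hide $G$-height-$n$ components whose cancellation destroys purity of $\bigoplus_i\langle y_i\rangle$. I would overcome it by invoking that two flags on the finite-dimensional space $H[p]$ always admit a common adapted basis; applying this to the complete $H$-height flag and to the one-step flag cut out by $H\cap p^nG$ furnishes the desired socle basis—equivalently, it shows the top subsocle $H\cap p^nG$ splits off as a summand of $H$. It is precisely here that having only \emph{two consecutive} Ulm invariants is used, through the fact that the $G$-valuation on $H[p]$ takes only two values and that a flag-adapted basis of such a valuated space is automatically valuated independent. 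The concluding assertion that such $G$ are semi-generalized Bassian is then immediate, since the property just established is formally stronger than the defining condition of a semi-generalized Bassian group.
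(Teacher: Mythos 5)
Your proposal is correct in outline, and its sufficiency half takes a genuinely different route from the paper's. The paper proves the same gapless-height Claim for $G=\bigl(\bigoplus_I \mathbb Z_{p^n}\bigr)\oplus\bigl(\bigoplus_J \mathbb Z_{p^{n+1}}\bigr)$, but then avoids all basis constructions: given $H\leq G$ it chooses, by maximality, a subgroup $B\supseteq H$ maximal with $B[p]=H[p]$, and shows $B$ is pure directly (maximality forces $B\cap pG=pB$, so any $x\in B[p]$ with $\val x_B=k$ can be written $x=p^ky$ with $\val y_G=0$, and the Claim then gives $\val x_G=k$); since bounded pure subgroups are summands and $B[p]=H[p]$ makes $H$ essential in $B$, this finishes. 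Your route constructs the summand explicitly via a basis of $H[p]$ adapted simultaneously to the $H$-height flag and to the one-step flag cut out by $H\cap p^nG$, realized as the socles of a cyclic decomposition of $H$ and lifted to height-zero generators. Your version is more informative (it exhibits the summand and shows exactly where having only two consecutive Ulm invariants is used), while the paper's maximality trick is shorter and sidesteps both the flag lemma and the realizability question. The necessity halves are essentially the same argument: an element whose height sequence has two gaps generates a cyclic subgroup that cannot be essential in any summand, since that summand would be cocyclic, hence reduced cyclic and pure.

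Two points in your write-up need repair. First, $H[p]$ is in general \emph{infinite}-dimensional, so you cannot appeal to a fact about two flags on a finite-dimensional space. The fact you need is still true because both flags have finite length: one proves it by induction on the length of one flag, using the modular law $W_j\cap\bigl(U+W_{j-1}\bigr)=(W_j\cap U)+W_{j-1}$ to extend an adapted basis; this (and the parenthetical claim that a basis adapted to the $H$-height flag really is the socle set of some cyclic decomposition of $H$, which follows from the same socle-height purity criterion you invoke) should be argued, not merely asserted. Second, in the necessity direction your window argument excludes only pairs of \emph{finite} socle heights $s<t$, since the construction $v=a+p^{s+1}\beta$ requires $\beta$ to have finite order $p^{t+1}$; to conclude $p^{n+1}G[p]=0$ you must also rule out socle elements of transfinite height. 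This is easy from what you already have: gaplessness of all elements forbids any nonzero element of height $\geq\omega$ (repeatedly writing $x=py$ with $\val x_G=\val y_G+1$ would yield an infinite descending sequence of ordinals), or note that a cyclic subgroup essential in a summand of a reduced group lies in a finite cyclic pure subgroup, so all heights are finite. As written, the step ``therefore $f_G(\alpha)=0$ unless $\alpha\in\{n-1,n\}$'' is unjustified for transfinite $\alpha$; to be fair, the paper's phrase ``in other words'' glosses over the very same point, which implicitly rests on basic-subgroup theory when $G$ is unbounded.
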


\begin{proof} Suppose first that $G$ fails to be of this form. In other words, there are $n,m$ such that $n+1<m$ and $G$ has a summand of the form $Y\oplus Z $, where $Y\cong\mathbb Z_{p^n}$ and $Z\cong \mathbb Z_{p^m}$. Let $y\in Y$ have order $p$, $z\in Z$ have order $p^2$ and $x=y+z$. Consider the cyclic subgroup $N=\langle x\rangle$.  Since $m-2>(n+1)-2=n-1$, it follows that $x$ has height sequence $(n-1, m-1, \infty, \infty, \cdots)$. Note that this has two gaps. If $N$ were essential in a summand $S$, then $S$ would have to be cyclic. If $S=\langle b\rangle\cong \mathbb Z_{p^j}$, then the height sequence of $b$ would have exactly one gap, namely from $\val {p^{j-1} b}_S=j-1$ to $\val {p^j b}_S=\infty$, but this would imply that the height sequence of $x$ would also have exactly one gap, contrary to its construction. (Notice that we will see this argument several times in the sequel.)




\medskip

Conversely, suppose $G$ is of the above form.

\medskip

{\bf Claim:} For all $x\in G$, the inequality $px\ne 0$ forces that $\val {px}_G=\val x_G+1$. Since $px\ne 0$, we can conclude that  $k:=\val x_G\leq n-1$. Suppose, for a moment, that $k=n-1$. Since $\val {px}_G\geq n$ and $p^{n+1}G=0$, we can conclude that $\val {px}_G=n$. Suppose next that $k=\val x_G<n-1$ and $\val {px}_G>k+1$. Let $y\in p^{k+1}G$ satisfy $py=px$. Then, $x-y$ would be an element of $G[p]$ of height $k<n-1$, which cannot happen, thus substantiating our claim.

\medskip

Furthermore, turning to the initial proof itself, suppose $A$ is some subgroup of $G$. Let $B$ be a subgroup containing $A$ that is maximal with respect to the property $B[p]=A[p]$. Since $A$ is essential in $B$, it suffices to show that $B$ is a summand of $G$. To show $B$ is a summand, it suffice to show that it is pure in $G$. And to show it is pure, it suffices to show, for all non-zero $x\in B[p]$, that $k:=\val x_B=\val x_G$. Let $y\in B$ satisfy $p^k y=x$. If $\val y_G\geq 1$, then by the maximality of $B$ we must have $y\in B\cap pG=pB$. But, if $y=py'$ for some $y'\in B$, then $p^{k+1} y' =x$ would contradict that $\val x_B= k$. Therefore, $\val y_G=0=\val y_B$. And applying the Claim $k$ times, we can conclude that
$$\val x_B=k = \val {p^k y}_G=\val x_G,$$
as desired.

The second part of the statement is now immediate.
\end{proof}

We are now working towards a characterization of the $p$-groups that are semi-generalized Bassian. The following contains a key step in that way.




\begin{lemma}\label{simplify} Suppose $G$ is a $p$-group with a summand of the form $X=Y\oplus Z$ and $n$ is a positive integer. If either of the following two conditions holds, then $G$ is not semi-generalized Bassian.

(a) $Y\cong \mathbb Z_{p^n}$ and $Z$ is the direct sum of an infinite number of copies of $\mathbb Z_{p^m}$, where either $n+2\leq m<\omega$ or $m=\infty$.

(b) $Y$ is the direct sum of an infinite number of copies of $\mathbb Z_{p^n}$ and $Z\cong \mathbb Z_{p^\infty}$.
\end{lemma}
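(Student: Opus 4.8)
The plan is to lean on Theorem~\ref{summands}: since the class of semi-generalized Bassian groups is closed under direct summands and $X=Y\oplus Z$ is a summand of $G$, it suffices to prove that $X$ \emph{itself} fails to be semi-generalized Bassian, after which the same failure passes to $G$. Thus I must exhibit a subgroup $N\leq X$ that admits an injection $X\to X/N$ yet is essential in no direct summand of $X$. In both cases I would take $N=\langle w\rangle$ cyclic of order $p^2$, with $w=p^{n-1}y_0+d$, where $y_0$ generates a copy of $\mathbb Z_{p^n}$ (the whole of $Y$ in case (a), one summand of $Y$ in case (b)), and $d$ is an element of order $p^2$ inside one summand of $Z$: concretely $d=p^{m-2}z_0$ for a generator $z_0$ of a $\mathbb Z_{p^m}$-summand when $m<\w$, and any order-$p^2$ element of a $\qc$-summand when $m=\infty$ (in case (b) this $d$ lies in the single copy $Z\cong\qc$, while $y_0$ comes from $Y$). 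A routine computation then gives $\val{w}_X=n-1$ and $pw=pd$.

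For the non-essentiality I would invoke the elementary fact that, in a $p$-group, $N$ is essential in $S$ if, and only if, $N[p]=S[p]$. As $N=\langle w\rangle$ has order $p^2$, its socle $N[p]=\langle pw\rangle$ is one-dimensional, so any summand $S$ in which $N$ is essential has $p$-rank one and is therefore cyclic or isomorphic to $\qc$. Because $S$ is a \emph{summand}, heights computed in $S$ coincide with those in $X$. If $S\cong\qc$ then $\val{w}_S=\infty$, contradicting $\val{w}_X=n-1$. If $S=\langle b\rangle\cong\mathbb Z_{p^j}$, then $\val{pw}_S=j-1$ is finite; but $\val{pw}_X=\infty$ whenever $d$ lies in a divisible summand (the cases $m=\infty$ and (b)), which is an immediate contradiction, while in the remaining case $\val{pw}_X=m-1$ together with $\val{w}_X=n-1$ would force $j=m$ and $m=n+1$, against $m\geq n+2$. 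Hence $N$ is essential in no summand of $X$.

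The substantive point — and the step I expect to be the main obstacle — is the construction of the injective homomorphism $X\to X/N$, for which the hypothesis of \emph{infinitely many} copies is indispensable. In case (a) I would use a shift: fixing $y_0$ and sending each copy of $Z$ one slot further along (so $z_i\mapsto z_{i+1}+N$, reindexing the $\qc$-summands when $m=\infty$) carries $X$ isomorphically onto the image of $\langle y_0\rangle$ together with all but the first copy of $Z$; this subgroup meets $N$ only in $\{0\}$, since $N$ involves exclusively $y_0$ and the first copy of $Z$, so the quotient map restricts to an embedding on it. In case (b) the defect $pw=pd$ sits in the single, non-shiftable copy $Z\cong\qc$, so instead I would note that the divisible image $(Z+N)/N\cong\qc$ splits off and compute $X/N\cong\qc\oplus\mathbb Z_{p^{n-1}}\oplus\bigoplus_{\w}\mathbb Z_{p^n}$, which visibly contains a copy of $X\cong\qc\oplus\bigoplus_{\w}\mathbb Z_{p^n}$.

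Combining the previous two paragraphs with Theorem~\ref{summands}, I conclude that $X$, and therefore $G$, is not semi-generalized Bassian. The remaining verifications — that $w$ genuinely has order $p^2$, that the claimed intersections with $N$ vanish, and that the quotient computation in case (b) is accurate — are straightforward order-and-height bookkeeping; the real content is the exploitation of infinite multiplicity to manufacture a self-embedding of $X$ into its proper quotient $X/N$.
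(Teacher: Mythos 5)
Your proposal is correct and follows essentially the same route as the paper's proof: reduce to $G=X$ via Theorem~\ref{summands}, take $N$ cyclic of order $p^2$ generated by an element straddling $Y$ and $Z$ whose height sequence has a bad gap, rule out essentiality in a summand by the socle-rank-one/height argument, and manufacture the injection $X\to X/N$ by exploiting the infinite multiplicity of copies. The only cosmetic differences are in how that injection is realized: in case (a) the paper embeds $Y$ into a spare copy of $\mathbb{Z}_{p^m}$ and uses an isomorphism $Z\cong Z'$ rather than your shift fixing $Y$, and in case (b) it invokes non-reducedness of $(Y_2\oplus Z)/N$ instead of your explicit computation $X/N\cong \mathbb{Z}_{p^\infty}\oplus\mathbb{Z}_{p^{n-1}}\oplus\bigoplus_{\omega}\mathbb{Z}_{p^n}$.
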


\begin{proof} Since by Theorem~\ref{summands} a summand of a semi-generalized Bassian group retains that property, we may assume $G=X$.

Regarding (a), let $Z=Z_1\oplus Z_2\oplus Z'$, where $Z_1\cong Z_2\cong \mathbb Z_{p^n}$ and $\sigma: Z\cong Z'$ is an isomorphism. Next, let $y\in Y$ have order $p$, $z\in Z_1$ have order $p^2$ and $x=y+z\in G$. If $N=\langle x\rangle$, then it follows that $N$ is not an essential subgroup of a summand $S$. Suppose otherwise: since $\val x_N=n-1\ne \infty$, $S$ will also be cyclic, but this contradicts the fact that $\val {px}_G=\val {pz}_G\ne n=\val x_G+1$, which cannot happen in a cyclic group.

We need to construct an injection $\phi:G\to G/N=\overline G$. Note that
$$G=Y\oplus Z_1\oplus Z_2\oplus Z'\ {\rm and \ } \overline G\cong((Y\oplus Z_1)/N)\oplus Z_2\oplus Z'.$$
There is clearly an injection $\tau:Y\to Z_2$. So,
$$\phi=(\tau,\sigma):G=Y\oplus Z\to Z_2\oplus Z'\subseteq \overline G$$
is the required injective homomorphism.

Turning to (b), again, let $Y=Y_1\oplus Y_2$, where $\tau:Y\cong Y_1$ is an isomorphism and $Y_2\cong \mathbb Z_{p^n}$. Let $y\in Y_2$ have order $p$, $z\in Z$ have order $p^2$ and $x=y+z$. As before, if $N=\langle x\rangle$, then $N$ will not be an essential subgroup of a summand of $G$. So, we need to show that there is an injection $\phi:G\to \overline G.$

Clearly, $(Y_2\oplus Z)/N$ is not reduced, so there is an injective homomorphism $\sigma:Z\to (Y_2\oplus Z)/N$. Therefore,
$$\phi=(\tau, \sigma):G=Y\oplus Z\to Y_1\oplus ((Y_2\oplus Z)/N)\cong \overline G$$
is our required injection.
\end{proof}

This brings us to our promised above characterization of semi-generalized Bassian $p$-groups.

\begin{theorem}\label{main} If $G$ is a $p$-group, then $G$ is semi-generalized Bassian if, and only if, one of the next three conditions hold:

(1) $G$ is divisible;

(2) $G$ has finite $p$-rank;

(3) For some positive integer $n$, $G\cong G'\oplus F$, where $G'$ is isomorphic to a group as in Theorem~\ref{6} and $F$ is a finite subgroup such that $F[p]\subseteq p^{n+1} G[p]$.
\end{theorem}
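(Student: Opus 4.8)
The plan is to prove the two implications separately, using throughout that summands inherit the property (Theorem~\ref{summands}) and the two prohibited summand shapes isolated in Lemma~\ref{simplify}. For sufficiency, (1) is exactly Example~\ref{(1)}. For (2) I would observe that a $p$-group of finite $p$-rank splits as $\mathbb Z(p^\infty)^k\oplus F$ with $F$ finite; here the socle is finite and an injection $G\to G/H$ can increase neither the divisible rank nor the socle dimension, which constrains $H$ enough that a short direct argument places it essentially inside a cocyclic summand. The genuine content is the sufficiency of (3), where $G=G'\oplus F$ is a bounded direct sum of cyclics whose Ulm invariants are infinite only at the two adjacent heights $n-1,n$, apart from the finite high-socle correction $F$.

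For (3) I would run the argument of Theorem~\ref{6} with a correction. Given $H\le G$ with $G\hookrightarrow G/H$, enlarge $H$ to a subgroup $B$ maximal with respect to $B[p]=H[p]$; then $H$ is essential in $B$ (equal socles force essentiality in a $p$-group, since any nonzero subgroup meets its own socle), so it suffices to prove $B$ is pure, hence a summand. The height-raising Claim of Theorem~\ref{6}, namely $px\ne0\Rightarrow\val{px}_G=\val x_G+1$, holds for all elements of height at most $n$; it can fail only on the finitely many cyclic summands of $F$, all of order $\ge p^{n+2}$. The hypothesis $F[p]\subseteq p^{n+1}G[p]$ is exactly what forces the socle of $F$ into high height, and combined with the embedding constraint coming from $G\hookrightarrow G/H$ it is what I expect to push these finitely many exceptional generators through the purity verification.

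For necessity I would begin from the splitting $G=D\oplus R$ into divisible and reduced parts. Lemma~\ref{simplify} disposes of every mixed configuration: if $D$ had infinite rank and $R$ possessed any finite cyclic summand $\mathbb Z_{p^a}$, then part (a) (case $m=\infty$) applied to the summand $\mathbb Z_{p^a}\oplus\bigoplus_\infty\mathbb Z(p^\infty)$ would contradict the semi-generalized Bassian property, so $D$ of infinite rank forces $R=0$ and we are in (1); and a nonzero divisible part together with an $R$ of infinite $p$-rank is impossible, since a bounded $R$ of infinite rank contains infinitely many summands of one order $\mathbb Z_{p^a}$, yielding the forbidden summand of part (b), while an unbounded $R$ is excluded by the next step. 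Thus a nonzero $D$ leaves only finite $p$-rank, giving (2), and the essential case is $D=0$, that is, $G$ reduced, semi-generalized Bassian, of infinite $p$-rank; here I must produce the form in (3). The decisive step is that such a $G$ is bounded: if not, pick two cyclic summands of a basic subgroup $B$ of orders $p^a,p^b$ with $b\ge a+2$ (available as $B$ is unbounded), so that $x=p^{a-1}u+p^{b-2}v$ has height sequence $(a-1,b-1,\infty,\dots)$ with two gaps; then $N=\langle x\rangle$ has one-dimensional socle and so, by the gap-counting argument of Theorem~\ref{6}, cannot be essential in any summand (which would have to be cyclic as $G$ is reduced). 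Since $N$ is finite and $G$ is unbounded, the tail sums of the Ulm invariants of $G/N$ are infinite in every degree, so the matching criterion for embedding direct sums of cyclics yields $G\hookrightarrow G/N$, contradicting the semi-generalized Bassian property. Hence $G$ is bounded, thus a direct sum of cyclics; letting $p^{n+1}$ be the largest order occurring with infinite multiplicity, Lemma~\ref{simplify}(a) applied to $\mathbb Z_{p^a}$ (any present order with $a\le n-1$) against the infinitely many copies of $\mathbb Z_{p^{n+1}}$ forces all orders below $p^n$ to vanish; collecting the orders $p^n,p^{n+1}$ into a Theorem~\ref{6}-group $G'$ and the finitely many remaining summands (orders $\ge p^{n+2}$) into a finite $F$ gives $G\cong G'\oplus F$ with $F[p]\subseteq p^{n+1}G[p]$, which is (3).

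The step I expect to be the main obstacle is the boundedness argument, and specifically the construction of the injection $G\to G/N$ when $G$ is only a reduced unbounded $p$-group rather than an actual direct sum of cyclics. The clean tail-sum/matching argument requires recognizing $G/N$ as a direct sum of cyclics, so the real work is to reduce the general unbounded reduced case to that situation; the natural routes are to locate a suitable unbounded direct-sum-of-cyclics summand or to argue the embedding through a basic subgroup $B$ together with the divisibility of $G/B$, and pathological reduced groups (for instance torsion-complete ones, which carry no unbounded direct-sum-of-cyclics summand) are where care is needed. A secondary delicate point, as indicated above, is making the hypothesis $F[p]\subseteq p^{n+1}G[p]$ perform precisely the right bookkeeping in the purity verification for the sufficiency of (3).
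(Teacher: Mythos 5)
Your overall skeleton (split $G=D\oplus R$, use Theorem~\ref{summands} and Lemma~\ref{simplify} to eliminate the mixed configurations, then the bounded endgame collecting orders $p^{n},p^{n+1}$ into $G'$ and the finitely many higher orders into $F$) coincides with the paper's, and that endgame is correct. But the two steps you yourself flag as ``obstacles'' are genuine gaps, and in both cases the paper's proof turns on an idea your proposal does not contain. (Your case (2) is also only a sketch; the paper proves there that $N\subseteq D$ by citing \cite[Lemma~2.1(b)]{K1} to get that $\phi$ is an isomorphism and then comparing the finite quotients $G/D$ and $\overline G/D'$.)

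First, sufficiency of (3). You propose to re-run the purity argument of Theorem~\ref{6} and hope that $F[p]\subseteq p^{n+1}G[p]$ together with the embedding $G\hookrightarrow G/H$ ``pushes the exceptional generators through,'' but no mechanism is given, and the height-raising Claim genuinely fails on $F$. Note also that when $F\ne 0$ a group of form (3) does \emph{not} have the property that every subgroup is essential in a summand (that is exactly the ``only if'' half of Theorem~\ref{6}), so the hypothesis $G\hookrightarrow G/H$ must enter in an essential way, and your proposal never says how. The paper's solution avoids redoing any purity computation: since $p^{n+1}G$ is finite and $\phi$, $\pi$ restrict to an injection into and a surjection onto $p^{n+1}\overline G$, both restrictions are isomorphisms, whence $N\cap p^{n+1}G=0$; one then chooses a $p^{n+1}G$-high subgroup $G''$ of $G$ containing $N$, observes $G\cong G''\oplus F$ with $G''\cong G'$, and applies Theorem~\ref{6} verbatim to $G''$. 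This reduction (kill the intersection with $p^{n+1}G$, then slide $N$ into a complement of $F$) is the missing idea.

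Second, boundedness of the reduced part in the necessity direction. Your route — a two-gap element $x$, $N=\langle x\rangle$ finite, then an embedding $G\hookrightarrow G/N$ by Ulm-invariant matching — requires $G/N$ (hence essentially $G$) to be a direct sum of cyclics, and as you note it breaks down for, e.g., torsion-complete groups; the reduction you hope for is precisely what is unavailable there. The paper's argument is structurally different and covers every unbounded reduced group: write $R\cong B\oplus T$ with $T$ of equal infinite rank and final rank; by \cite[Lemma~2.3(a)]{K1} there is a surjection $\pi:T\to E$ onto a divisible hull $E$ of $T$, with kernel $N$ arranged (by modifying $\pi$ on a cyclic summand) not to be essential in $T$; the inclusion $T\subseteq E\cong T/N$ is then the required injection, and if $N$ were essential in a summand $S$ with $T=S\oplus X$, then $X\cong T/S\cong(T/N)/(S/N)\cong E/\pi(S)$ would be simultaneously nonzero reduced and divisible, a contradiction. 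The key point is that this kernel $N$ is large (not finite), and divisibility of the target replaces any matching of Ulm invariants; your finite-kernel strategy would need entirely different tools to handle the pathological reduced groups you correctly identify as the danger.
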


\begin{proof} We first verify that a group satisfying any of these three conditions is semi-generalized Bassian. First, a divisible group has the property that any subgroup is an essential subgroup of a summand; namely, any copy of the divisible hull of the subgroup. Therefore, any group satisfying (1) is semi-generalized Bassian.

In considering (2) and (3), we will assume $N$ is a subgroup of $G$, $\pi: G\to G/N:= \overline G$ is the canonical epimorphism and $\phi:G\to \overline G$ is an injective homomorphism; we need to show $N$ is an essential subgroup of a summand of $G$.

Suppose (2) holds, so $G$ has finite $p$-rank, i.e., $G=R\oplus D$, where $R$ is finite and $D$ is divisible of finite rank. We assert that $N\subseteq D$, which will complete the proof since, again, any subgroup of a divisible group is essential in a summand.

Applying \cite[Lemma~2.1(b)]{K1}, $\phi$ must be an isomorphism $G\to \overline G$; let $D'= \phi(D)$ be the maximal divisible subgroup of $ \overline G$. Clearly, $\pi$ determines a surjective homomorphism $\pi':G/D\to \overline G/D'$. And since $G/D\cong R\cong \phi(G)/\phi(D)=\overline G/D'$, and $R$ is finite, we can conclude that $\pi'$ is an isomorphism.

Evidently, $\pi'([N+D]/D) =0$, so that $[N+D]/D=0$, i.e., $N\subseteq D$, as desired.

\medskip

Lastly, suppose (3) holds, with $G\cong G'\oplus F$ as stated. We claim that $N\cap p^{n+1}G=0$. Clearly, $\phi$ and $\pi$ restrict to an injective and a surjective homomorphism $p^{n+1} G\to p^{n+1} \overline G$. But since $p^{n+1}G$ is finite, these are both isomorphisms. In particular, the claim follows.

Let $G''$ be any $p^{n+1}G$-high subgroup of $G$ containing $N$. It follows that $G\cong G''\oplus F$ and $G''\cong G'$, so that we may replace $G'$ by this new $G''$. So, we may assume that $G'$ contains $N$, and invoking Theorem~\ref{6}, $G'$ has a summand containing $N$ as an essential subgroup, so the same holds for $G$.

\medskip

Conversely, suppose $G$ is semi-generalized Bassian; we need to show it satisfies one of the above conditions. Let $G=R\oplus D$, where $R$ is reduced and $D$ is divisible.

We first assert that $R$ must be bounded, so assume otherwise. It follows that $R\cong B\oplus T$, where $B$ is bounded and $T$ has the same infinite rank and final rank. If we can show $T$ is not semi-generalized Bassian, then by Lemma~\ref{summand}, $G$ is not either, giving our contradiction.

Consulting with \cite[Lemma~2.3(a)]{K1}, there is a surjective homomorphism $\pi:T\to E$, where $E$ is a divisible hull of $T$. If $N$ is the kernel of $\pi$, we claim that there is no loss of generality in assuming that $N$ is not essential in $T$: If $T=C\oplus T'$, where $C$ is cyclic, then $E/\pi(T')$ must be simultaneously divisible and finite, hence $\{0\}$. Therefore, $\pi(T')=E$, so that we can possibly redefine $\pi$ to be injective on $C$.

Let $\phi:T\to E$ be the inclusion. So, if $T$ is semi-generalized Bassian, then $N$ is essential in a summand $S$ of $T$. If $T=S\oplus X$, then since $N$ is not essential in $T$, the summand $X$ is a non-zero reduced group. Since $N\subseteq S$, it follows that
$$
          X\cong T/S\cong (T/N)/(S/N)\cong E/\pi(S).
$$
This, however, is a contradiction, because $X\ne 0$ is reduced and $E/\pi(S)$ is divisible.

\medskip

Continuing with the whole proof, if $R=\{0\}$, then we are in case (1), and so we are done. Thus, we may assume $R$ is non-zero (and bounded).

Note that the application of Lemma~\ref{simplify}(a) assures that $D$ must have finite $p$-rank. Next, if $D\ne 0$, then since $R$ is bounded, Lemma~\ref{simplify}(b) insures that $R$ is finite, so that (2) holds.

Finally, suppose $D=\{0\}$. If $R$ has finite rank, then (2) holds; so assume it has infinite rank. Recall that $R$ is bounded, so we can let $n$ be the largest integer such that $f_R(n)$ is infinite. Consequently, Lemma~\ref{simplify}(a) gives that $f_R(j)=0$ whenever $j+2\leq n$. This clearly implies that $G$ satisfies (3), as desired.
\end{proof}

We now attack a major version of the mixed case, namely the splitting mixed groups. In this aspect, the next general statement is quite useful.

\begin{lemma}\label{newone1} If $G$ is a semi-generalized Bassian group, then any its $p$-component $G_p$ has a bounded reduced part, and so $G_p$ is a semi-generalized Bassian group which is a direct summand of $G$.
\end{lemma}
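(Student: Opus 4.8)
The plan is to argue by contradiction that the reduced part $R_p$ of $G_p$ is bounded, and then to read off the summand and hereditary assertions from standard facts about pure subgroups together with Theorem~\ref{summands}. First I would record two purity facts. Since $G_p$ is $p$-primary it is pure in $G$ (for the prime $p$ this is immediate from the definition of height, and for every other prime $q$ one has $G_p=qG_p\subseteq qG$), while its reduced part $R_p$, being a summand of $G_p$, is pure in $G_p$; by transitivity of purity $R_p$ is pure in $G$, and hence so is any pure subgroup of $R_p$. I would also note at once that once $R_p$ is known to be bounded the summand claim is routine: writing $G=D(G)\oplus G'$ with $G'$ reduced, the group $R_p=G'_p$ is a bounded pure subgroup of $G'$, hence a summand of $G'$, while the divisible part $D_p$ of $G_p$ is a summand of $D(G)$; assembling these shows that $G_p=D_p\oplus R_p$ is a summand of $G$, and then Theorem~\ref{summands} yields that $G_p$ is semi-generalized Bassian.

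Assume now, for contradiction, that $R_p$ is unbounded. Taking an unbounded basic subgroup of $R_p$ and selecting one cyclic summand of each of infinitely many distinct orders, I would produce a pure subgroup $C=\bigoplus_k\Ls c_k\Rs$ of $R_p$ with $o(c_k)=p^{r_k}$ and $r_1<r_2<\cdots\to\infty$; by the above $C$ is pure in $G$, and $C$ has equal infinite rank and final rank. Exactly as in the proof of Theorem~\ref{main}, \cite[Lemma 2.3(a)]{K1} supplies a surjection $\pi:C\to E$ onto the divisible hull $E$ of $C$; put $N=\Ker\pi$, so $C/N\cong E$ is divisible. Since a divisible subgroup splits off, $G/N\cong E\oplus(G/C)$, and extending the inclusion $C\hookrightarrow E$ to a homomorphism $\iota:G\to E$ (possible as $E$ is injective) the map $g\mapsto(\iota(g),g+C)$ is an injection $G\to E\oplus(G/C)\cong G/N$. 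Thus there is an injective homomorphism $G\to G/N$.

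The heart of the matter is to show that $N$ is not essential in any summand of $G$, which contradicts that $G$ is semi-generalized Bassian. The crucial observation is a height computation: since $\dim_{\mathbb{F}_p}N[p]=\aleph_0$ while every subspace of $C[p]$ of bounded height is finite-dimensional (the socle generators $p^{r_k-1}c_k$ have strictly increasing heights $r_k-1$), the space $N[p]$ must contain elements of arbitrarily large finite height, and by purity of $C$ these heights are computed in $G$. Now suppose $N$ were essential in a summand $S$ of $G$. Because $N$ is $p$-torsion and essential in $S$, the group $S$ is forced to be a $p$-group and, by essentiality in the $p$-primary setting, $S[p]=N[p]$; moreover, as $S$ is a summand heights in $S$ agree with heights in $G$, so $S[p]$ contains elements of unbounded finite height. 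On the other hand $S$ is a summand of the semi-generalized Bassian group $G$, hence semi-generalized Bassian by Theorem~\ref{summands}, so by Theorem~\ref{main} we may write $S=D_S\oplus R_S$ with $R_S$ bounded; but then every element of $S[p]$ has height either $\infty$ (when its reduced component vanishes) or at most the bound of $R_S$, so $S[p]$ possesses no element of unbounded finite height, the desired contradiction.

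This contradiction forces $R_p$ to be bounded, whence the summand and semi-generalized Bassian conclusions follow as indicated in the first step. The step I expect to be the main obstacle is exactly the one demanding care in the mixed setting: since $G_p$ need not be a direct summand of $G$, one cannot simply invoke the $p$-group classification directly, and instead must exhibit the failure of the semi-generalized Bassian property inside all of $G$ — constructing the injection $G\to G/N$ through injectivity of the divisible hull, and then ruling out essentiality in any summand by means of the socle-height dichotomy furnished by Theorem~\ref{main}.
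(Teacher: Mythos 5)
Your proof is correct, and there is no circularity in your appeals to Theorem~\ref{summands} and Theorem~\ref{main}: both precede Lemma~\ref{newone1} in the paper and neither depends on it. However, you reach the contradiction by a genuinely different route. The paper also assumes the reduced part unbounded, but it splits off a summand $C_p''$ with $r(C_p'')=\mathrm{final}\,r(C_p'')$, chooses via \cite[Theorem~35.6]{F1} a basic subgroup $B_p$ with $r(C_p''/B_p)=r(C_p'')$, builds an injection $G\to G/B_p$ of the form $\psi=\varphi+\overline{f}$ by extending an embedding $C_p''\to C_p''/B_p$ over all of $G$ (using divisibility of the quotient), and then derives its contradiction from \cite[Lemma~66.1]{F1}: if $B_p$ were essential in a summand $U$ of $G$, then $U\leq G_p$ and $B_p[p]=U[p]$, so purity would force the proper basic subgroup $B_p$ to be a summand of $G_p$, which is impossible. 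You instead take for the critical subgroup the kernel $N$ of a surjection of a countable pure subgroup $C$ (cyclic summands of strictly increasing orders) onto its divisible hull, i.e.\ the same device \cite[Lemma~2.3(a)]{K1} that the paper deploys inside the proof of Theorem~\ref{main}; your injection $g\mapsto(\iota(g),g+C)$ is in fact tidier than the paper's $\psi$; and your final contradiction is the socle--height dichotomy: $S[p]=N[p]$ would contain elements of unbounded finite height, while Theorems~\ref{summands} and \ref{main} force $S$ to be the direct sum of a divisible and a bounded group, whose socle heights are either bounded or infinite. The paper's version buys independence from the classification in Theorem~\ref{main}, resting only on classical facts about basic subgroups; yours buys reuse of machinery already proved and a very transparent endgame. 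One assertion you should not leave implicit is $\dim_{\mathbb{F}_p}N[p]=\aleph_0$: it is true, but needs a line — if $N[p]$ were finite-dimensional, then $N$, being a subgroup of a direct sum of cyclics inside the reduced group $C$, would be finite, say $p^kN=0$; divisibility of $C/N$ gives $C=p^nC+N$ for every $n$, hence $p^kC=p^{n+k}C$ for every $n$, so $p^kC\neq 0$ would be a nonzero divisible subgroup of the reduced unbounded group $C$, a contradiction.
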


\begin{proof} Assume the contrary that the reduced part $C_p$ of $G_p$ is unbounded. Thus, we can represent $C_p$ as
$C_p=C_p'\oplus C_p''$, where $C_p'$ is bounded and $r(C_p'')=\mathrm{final}\, r(C_p'')$ (see, for instance, \cite[\S 36, Exercise 11]{F1}). However, in $C_p''$ there exists a basic subgroup $B_p$ such that $r(C_p'')=r(C_p''/B_p)$ (see
\cite[Theorem 35.6]{F1}), so that there exists an injection $f: C_p''\to C_p''/B_p$. Since $C_p''/B_p$ is divisible, we can extend $f$ to a homomorphism $\overline{f}: G\to C_p''/B_p$. Moreover, $G/B_p=C_p''/B_p\oplus K/B_p$ for some
$K\leq G$. 

Let us now $\varphi: G\to K/B_p$ be an epimorphism and set $\psi:=\varphi+\overline{f}$. Assume $0\neq g\in\ker\psi$.
If, for a moment, $\varphi(g)=0$, then $g\in C''_p$ and so $\overline{f}(g)\neq 0$ contradicting $g\in\ker\psi$. Now, we can choose $0\neq\varphi(g)$ and thus $$0\neq\overline{f}(g)=-\varphi(g)\in C_p''/B_p\cap K/B_p=\{0\},$$ which also is a contradiction. Consequently, $\ker\psi=\{0\}$ and $\psi: G\to G/B_p$ is an injection. By assumption, $B_p$ is essential in some direct summand, say $U$ of $G$, whence $G=U\oplus V$. Furthermore, since $B_p$ essential in $U$, one sees that $U\leq G_p$, $G_p=U\oplus (G_p\cap V)$ and $B_p[p]=U[p]$. Therefore, \cite[Lemma 66,1]{F1} allows us to deduce that $B_p$ is a direct summand of $G_p$, that is impossible. Consequently, the reduced part of $G_p$ is bounded, as asked for. Finally, Theorem~\ref{summands} ensures that $G_p$ is a semi-generalized Bassian group as being a direct summand of $G$, as promised.
\end{proof}

\begin{proposition}\label{newone2} The splitting group $G=T\oplus R$, where $R$ is the torsion-free part of $G$, is a semi-generalized Bassian group if, and only if, $T$ and $R$ are both semi-generalized Bassian groups and, moreover, if the rank of $R$ is infinite, then $T$ is divisible.
\end{proposition}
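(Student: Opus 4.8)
The plan is to prove both directions by leveraging the structural results already established, in particular Theorem~\ref{summands}, Lemma~\ref{newone1}, Theorem~\ref{5} (the torsion-free classification), and Theorem~\ref{main} (the $p$-group classification). For the necessity direction, assume $G=T\oplus R$ is semi-generalized Bassian. Since $T$ and $R$ are both direct summands of $G$, Theorem~\ref{summands} immediately yields that each of them is semi-generalized Bassian; this disposes of the first conjunct with no real work. The substantive part of necessity is the conditional claim: if $r(R)$ is infinite, then $T$ is divisible. The strategy here is to argue by contradiction. If $T$ were not divisible, its reduced part would be a nonzero bounded group by Lemma~\ref{newone1} (each $p$-component has bounded reduced part), so some $p$-component $T_p$ would contain a nonzero bounded summand, hence a summand isomorphic to $\mathbb{Z}_{p^k}$ for some $k\geq 1$.

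The heart of necessity will then be to manufacture a violating injection by pairing this cyclic torsion summand against the infinite-rank torsion-free part. First I would use that $R$ has infinite rank to produce, as in the proof of Proposition~\ref{4}, a free subgroup $F\leq R$ of infinite rank together with a subgroup $X\leq F$ such that $F/X$ is divisible of rank at least one, giving an injection $R\to R/X$ whose cokernel contains a $\mathbb{Q}$ (or a $\mathbb{Z}(p^\infty)$ after reduction mod $p$). The aim is to absorb the cyclic summand $\langle c\rangle\cong\mathbb{Z}_{p^k}$ of $T_p$ into this newly created divisible part of the quotient, exactly as Lemma~\ref{simplify} absorbs a cyclic $p$-summand into a $\mathbb{Z}_{p^\infty}$. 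One then builds an injective homomorphism $G\to G/N$ for a suitable finite subgroup $N$ meeting $\langle c\rangle$ nontrivially, and checks that $N$ cannot be essential in any summand, using the height-sequence/gap argument that recurs throughout (a cyclic $N$ with a height gap cannot sit essentially in a bounded cyclic summand, and it cannot reach into the divisible-but-torsion-free or reduced pieces). This contradicts the semi-generalized Bassian hypothesis.

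For the sufficiency direction, assume $T$ and $R$ are both semi-generalized Bassian, and assume the divisibility condition holds when $r(R)=\infty$. I would split into two cases according to whether $r(R)$ is finite or infinite. When $r(R)$ is finite, $R$ is then a finite-rank torsion-free semi-generalized Bassian group, hence Bassian by Proposition~\ref{4}, and the task is to show that a splitting extension of a semi-generalized Bassian torsion group by a finite-rank Bassian torsion-free group is again semi-generalized Bassian; given an injection $G\to G/N$, one analyzes how $N$ distributes across $T$ and $R$ and reassembles essential-in-summand witnesses from those of the two factors. When $r(R)=\infty$, the hypothesis forces $T$ divisible, so $G=T\oplus R$ with $T$ divisible; invoking Theorem~\ref{5} on the semi-generalized Bassian $R$ gives $R\cong D\oplus R_0$ with $D$ divisible and $R_0$ finite-rank fully decomposable homogeneous, whence $G\cong(T\oplus D)\oplus R_0$ is a divisible group plus a finite-rank homogeneous completely decomposable group, a configuration already known to be semi-generalized Bassian via Example~\ref{(1)} and the purification argument of Theorem~\ref{5}.

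I expect the main obstacle to be the necessity argument in the infinite-rank case, specifically constructing the injection $G\to G/N$ that simultaneously exploits the infinite torsion-free rank and the nonzero reduced $p$-torsion while guaranteeing that $N$ fails to be essential in any summand. The delicate point is controlling the height sequence of the chosen element $x=c+(\text{torsion-free contribution})$ so that it genuinely has a gap that no cyclic-or-divisible summand can accommodate; getting the mixed-element heights right, rather than the pure $p$-group heights handled cleanly in Lemma~\ref{simplify}, is where the calculation must be done carefully. The sufficiency side is mostly bookkeeping once the two classification theorems are in hand, though the finite-rank case does require a genuine (if routine) argument reassembling the essential-summand witness across the splitting.
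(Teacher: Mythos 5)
Your necessity argument rests on a step that cannot work. Any finite subgroup $N$ of $G=T\oplus R$ is torsion, hence lies inside $T$; and if an injection $\phi\colon G\to G/N$ exists at all, then since $G/N\cong (T/N)\oplus R$ has torsion subgroup $T/N$, the restriction $\phi|_T\colon T\to T/N$ is itself injective. But you have already established (via Theorem~\ref{summands}) that $T$ is semi-generalized Bassian, so this forces $N$ to be essential in a direct summand of $T$, which is in turn a direct summand of $G$. Thus no finite $N$ can ever witness a failure of the semi-generalized Bassian property of $G$: your plan to derive a contradiction from ``a suitable finite subgroup $N$ meeting $\langle c\rangle$ nontrivially'' is structurally impossible, not merely delicate. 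The same objection kills the variant where one quotients by $X\leq F\leq R$: by Theorem~\ref{5} (indeed by the argument in its proof), every subgroup of the infinite-rank semi-generalized Bassian group $R$ is essential in a direct summand of $R$, hence of $G$. Any candidate subgroup contained entirely in one of the two factors is automatically harmless.

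The missing idea is that the offending subgroup must be infinite cyclic and sit diagonally across the splitting. The paper takes $X=\langle a+pb\rangle$, where $\langle a\rangle\cong\mathbb{Z}_{p^n}$ is a cyclic summand of $T_p$ (available once $T_p$ is not divisible) and $0\neq b\in R$. This $X$ is torsion-free; factoring it out welds $a$ to $-pb$, creating divisible torsion in $G/X$ that absorbs $\langle a\rangle$, while the infinite-rank divisible part $D$ of $R$ (supplied by Theorem~\ref{5}) absorbs a copy of $R$; together these give the required injection $G\to G/X$. If now $X$ were essential in a summand $S$ of $G$, then $S$ would be torsion-free of rank one (an essential extension of a torsion-free group is torsion-free of the same rank), and since heights of elements of a summand agree with their heights in $G$, the element $x=a+pb$ would have to satisfy $|p^nx|_G=|x|_G+n$, because heights never jump in a torsion-free group; this contradicts $|x|_G=0$ together with $|p^nx|_G=|p^{n+1}b|_G\geq n+1$. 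Your closing paragraph actually gestures at exactly this element ($x=c+r$ with $r$ torsion-free, which generates an infinite cyclic group --- inconsistently with the finite $N$ of your main text), but you leave both the injection and the height computation as acknowledged obstacles, and they are the entire content of the necessity proof. On sufficiency, two smaller repairs: in the finite-rank case, additivity of torsion-free rank forces $N\subseteq T$ outright, so nothing needs to be ``reassembled across the splitting''; and in the infinite-rank case you cannot quote Theorem~\ref{5} for $(T\oplus D)\oplus R_0$, since that theorem concerns only torsion-free groups and here the divisible part carries torsion --- one needs the mixed analogue (Theorem~\ref{65}, proved only later in the paper) or the direct argument the paper gives, splitting a subgroup $X$ along $X\cap T$ and an $X\cap T$-high complement and using that divisible subgroups are absolute direct summands.
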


\begin{proof} "\textbf{Necessity}". Referring to Theorem~\ref{summands}, both $T$ and $R$ are semi-generalized Bassian groups. Let us assume that the rank of $R$ is infinite. With Theorem~\ref{5} at hand, one writes that $R=D\oplus K$, where $D$ is divisible and $K$ is a finite rank fully decomposable homogeneous group. Supposing that $T_p$ is non-divisible for some prime $p$, then in $T$ there will exist a cyclic direct summand $\langle a\rangle$ with $o(a)=p^n$. Besides, letting $0\neq b\in R$ and $X=\langle a+pb\rangle$, since $X$ is torsion-free, we derive that $G/X$ contains a subgroup isomorphic to $T$, and since $D$ is divisible of infinite rank, $G/X$ contains a subgroup isomorphic to $R$. So, there is an injection $G\to G/X$ and, consequently, $\langle x\rangle$ is contained in some direct summand of $G$ of rank $1$. So, one infers that $|x|_G = 0$ and $|p^nx|_G\geq n+1$, that is manifestly imposable in a torsion-free group, as expected.

"\textbf{Sufficiency}". Let $X\leq G$ and there exists an injection $G\to G/X$. If we assume, for a moment, that the rank of $R$ is finite, then $X\leq T$ and the result is obviously true, because $T$ is a semi-generalized Bassian group.

Suppose now that the rank of $R$ is infinite. Thus, $T$ is divisible. If $X\cap T=\{0\}$, then as any divisible subgroup is an absolute direct summand, $X$ is contained in some torsion-free part of $G$, and so we are done.

If, however, $X\cap T\neq\{0\}$, then $X\cap T$ is contained as an essential subgroup in some direct summand, say $T'$, of $T$ and we may write $G=T'\oplus B$. Letting $\pi: G\to T'$ be the corresponding projection, then it follows that $$X\leq \pi X\oplus (1-\pi)X=\overline{X}.$$ Since $X\cap T$ is essential in $T'$, then one verifies that $X\cap T$ is essential in $\pi X$. Thus, the subgroup $(1-\pi)X$ is torsion-free and, since $\pi X$ is torsion, then $X\cap B$ is essential in $(1-\pi)X$, so that $X$ is essential in $\overline{X}$. We now have
$$(1-\pi)X\leq T''\oplus B',$$ where $T''=T(B)$. Since $$((1-\pi)X)\cap T''=\{0\}$$ and $T''$ is divisible, thus is an absolute direct summand, it is then possible to write that $(1-\pi)X\leq B'$. So, $(1-\pi)X$ is essential in some direct summand, say $B''$, in $B'$. Consequently, $X\leq T''\oplus B''$ and $X$ is essential in the direct summand
$T''\oplus B''$ of $G$, as required.
\end{proof}

We now need to prove the following two technical claims (for the first one, see \cite{FKS} too) in order to start the description of all groups with the property that each subgroup is essential in a direct summand of a given group.

\begin{lemma}\label{61} In a $p$-group $G$ every pure subgroup is a direct summand if, and only if, $G=D\oplus R$, where $D$ is a divisible group and $R$ is a bounded group.
\end{lemma}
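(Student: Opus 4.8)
The plan is to prove both implications using the standard pure-subgroup machinery of \cite{F1}, reducing each direction to the two classical facts that a bounded pure subgroup is a direct summand and that a divisible subgroup is an absolute direct summand.

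\emph{Sufficiency.} Assume $G=D\oplus R$ with $D$ divisible and $R$ bounded, say $p^kR=0$, and let $P$ be a pure subgroup of $G$. Since $D$ is divisible, $p^kG=p^kD\oplus p^kR=D$, so the divisible summand coincides with $p^kG$. First I would show that the reduced part of $P$ is bounded: purity gives $p^kP=P\cap p^kG=P\cap D$, and likewise $p^{k+1}P=P\cap p^{k+1}G=P\cap D$, whence $p(P\cap D)=P\cap D$, so $p^kP=P\cap D$ is divisible. Writing $P=D(P)\oplus P_r$ with $P_r$ reduced, the relation $p^kP=D(P)\oplus p^kP_r$ then forces $p^kP_r=0$, so $P_r$ is bounded. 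A bounded pure subgroup is a direct summand of $G$, and the divisible part $D(P)$ is also a summand of $G$. To assemble these into a splitting of $P$ itself, I would split off $D(P)$ (which lies in $D$), pass to $\bar G=G/D(P)$, where the image of $P$ is bounded and pure and hence a summand, and pull the complement back, using the divisibility of $D(P)$ to promote the resulting complement to a genuine complement of $P$ in $G$.

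\emph{Necessity.} Assume every pure subgroup of $G$ is a direct summand, and write $G=D\oplus R$ with $D$ the maximal divisible subgroup and $R$ reduced; it suffices to prove that $R$ is bounded. Since $R$ is a pure summand of $G$, every pure subgroup of $R$ is pure in $G$, hence a summand of $G$, and therefore by the modular law a summand of $R$; thus $R$ inherits the hypothesis. Taking a basic subgroup $B$ of $R$, purity of $B$ gives a splitting $R=B\oplus C$, and since $R/B$ is divisible the complement $C\cong R/B$ is divisible; as $R$ is reduced this forces $C=0$, so $R=B$ is a direct sum of cyclic groups.

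It remains to argue that an \emph{unbounded} direct sum of cyclic $p$-groups cannot enjoy this property, which is where I expect the one genuinely computational point to lie. Choosing a summand $\bigoplus_{n}\langle a_n\rangle$ of $R$ with $o(a_n)=p^{k_n}$ and $k_n\to\infty$ strictly increasing, I would consider $N=\langle\, a_n-pa_{n+1}:n\geq 1\,\rangle$. In $G/N$ one has $\bar a_n=p\bar a_{n+1}=p^2\bar a_{n+2}=\cdots$, so each $\bar a_n$ is divisible by arbitrarily high powers of $p$ while remaining nonzero; hence $G/N$ is not reduced. If $N$ were a direct summand, then $G/N$ would be isomorphic to a summand of the reduced group $G$ and thus reduced, a contradiction. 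Therefore $N$ is a pure subgroup that is not a summand, so $R$ must be bounded. The main obstacle is precisely the verification that this $N$ is pure in $R$ — equivalently, that $G/N$ has the claimed infinite divisibility with $\bar a_n\neq 0$; once this is in place, the rest is a formal interplay of purity, divisibility, and the modular law.
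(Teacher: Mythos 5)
Your sufficiency argument is correct and is essentially the paper's own proof (split the pure subgroup into its divisible part and a reduced part, show the reduced part is bounded, reassemble the two summands), and the first half of your necessity — reducing via a basic subgroup to the case where the reduced part $R$ is an unbounded direct sum of cyclic groups — is also sound and matches what the paper's terse necessity appeals to. The genuine gap is exactly at the point you flagged: with $o(a_n)=p^{k_n}$ and $k_n$ strictly increasing, the subgroup $N=\langle a_n-pa_{n+1}:n\geq 1\rangle$ need \emph{not} be pure, and your claimed equivalence (purity of $N$ iff the quotient is divisible with $\bar a_n\neq 0$) is false: a divisible quotient never implies purity (e.g.\ $\mathbb{Z}(p)\subseteq \mathbb{Z}(p^{\infty})$ has divisible quotient but is not pure). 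Concretely, take $k_n=2n$, so $o(a_n)=p^{2n}$, and put $b_n=a_n-pa_{n+1}$, $B=\bigoplus_n\langle a_n\rangle$. The quotient $B/N$ is still nonzero and divisible (every element is a multiple of some $\bar a_n$ and $\bar a_n=p\bar a_{n+1}$), yet $N$ is not pure: $p^2b_1=p^2a_1-p^3a_2=-p^3a_2$ lies in $N\cap p^3B$, but writing $-p^3a_2=p^3\sum_{n=1}^{M}c_nb_n$ and comparing coordinates in $\bigoplus_n\langle a_n\rangle$ forces $p\nmid c_2$, then inductively $c_n=p^{n-2}u_n$ with $p\nmid u_n$, while the $a_{M+1}$-coordinate requires $p^{2M-2}\mid c_M$ --- impossible for every $M\geq 1$. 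So $-p^3a_2\notin p^3N$. Since this $N$ is not pure, the fact that it is not a summand contradicts nothing, and your necessity proof collapses whenever the exponents occurring in $R$ have gaps; this cannot be evaded by choosing a better summand, since for $R=\bigoplus_n\mathbb{Z}(p^{2n})$ every infinite cyclic decomposition has gapped orders.

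The repair is the classical construction of a proper basic subgroup: set $b_n=a_n-p^{k_{n+1}-k_n}a_{n+1}$ instead. Then $o(b_n)=p^{k_n}$, the $b_n$ are independent, and every element of $N[p]=\bigoplus_n\langle p^{k_n-1}b_n\rangle$ has the same height in $N$ as in $B$, so $N$ is pure by the socle criterion for purity in $p$-groups (the same criterion the paper invokes in the proof of Theorem~\ref{6}); the quotient $B/N$ is nonzero divisible exactly as in your argument, and your concluding contradiction then goes through verbatim: if $N$ were a summand of the reduced group $R=B\oplus C$, then $R/N\cong (B/N)\oplus C$ would be isomorphic to a subgroup of $R$, yet it contains the nonzero divisible group $B/N$. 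Alternatively, you could bypass the hand construction entirely and quote \cite[Theorem~35.6]{F1} (as the paper itself does in its Lemma~\ref{newone1}): an unbounded $p$-group with rank equal to final rank has a basic subgroup $B_p$ with $r(C/B_p)=r(C)$, which is pure, proper, and has divisible quotient, hence cannot be a summand of a reduced group.
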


\begin{proof} "\textbf{Necessity}". It follows from the obvious fact that in a non-bounded $p$-group its basic subgroup is {\it not} a direct summand.

\medskip

"\textbf{Sufficiency}". Choose $H\leq G$ is pure in $G$ and write $H=D'\oplus R'$, where $D'$ is the divisible part
of $H$. Thus, $D'$ is a direct summand in $D$ and since $R'\cap D=\{0\}$ we know that $R'$ is contained in some
reduced part of $G$, where $R'$ is a direct summand. So, one verifies that $H$ is a direct summand in $G$, as required.
\end{proof}

\begin{lemma}\label{62} If $G$ is a mixed group in which every subgroup is essential in some direct summand of $G$, then the torsion part of $G$ is divisible.
\end{lemma}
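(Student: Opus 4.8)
The plan is to argue by contradiction: assuming the torsion part $T=T(G)$ is \emph{not} divisible, I will manufacture a single cyclic subgroup of $G$ that cannot be essential in any direct summand, contradicting the hypothesis. The first move is to localize at a prime and extract a torsion element of finite height. Since $T$ is not divisible, some primary component $T_p$ fails to be divisible, so it possesses a nonzero basic subgroup $B$; choosing a cyclic summand $\langle c\rangle\cong\Z_{p^s}$ of $B$, the element $a=p^{s-1}c$ has order $p$ and, by purity of $B$ in $T_p$, has $p$-height $t:=s-1<\infty$ computed inside $T_p$. The point I would stress here is that this finiteness survives passage to $G$: because $T$ is pure in $G$ (if $p^{k}g$ is torsion then $g$ is torsion, forcing $p^{k}G\cap T=p^{k}T$) and $T_p$ is a summand of $T$, the subgroup $T_p$ is pure in $G$, so $\val a_G=t$ as well. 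Throughout, $\val{\cdot}_G$ denotes the $p$-height.

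Next I would build the obstructing element. As $G$ is mixed, I pick $c'\in G$ of infinite order and set $b=p^{\,t+2}c'$, which again has infinite order and satisfies $\val b_G\geq t+2>t$. Put $x=a+b$. Then $x$ has infinite order, and since $\val a_G=t$ and $\val b_G$ are distinct, the minimum rule gives $\val x_G=t$; on the other hand $px=pa+pb=pb=p^{\,t+3}c'$, so
\[
   \val{px}_G\ \geq\ t+3\ >\ t+1 .
\]
In other words, the $p$-indicator of $x$ already exhibits a gap at its very first step.

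Finally I invoke the hypothesis: $\langle x\rangle$ is essential in some direct summand $S$ of $G$. Because $x$ has infinite order, essentiality forces $S$ to be torsion-free of rank $1$ -- any nonzero torsion element of $S$, or any element independent from $x$, would generate a subgroup meeting $\langle x\rangle\cong\Z$ trivially, which is impossible. But in a rank-$1$ torsion-free group the $p$-indicator of a nonzero element has no gaps, i.e. $\val{px}_S=\val x_S+1$ whenever $\val x_S<\infty$; and since $S$ is a direct summand, heights in $S$ agree with heights in $G$. Hence $\val{px}_G=\val x_G+1=t+1$, contradicting the displayed inequality, and so $T$ must be divisible. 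The two delicate steps -- which I regard as the crux -- are guaranteeing that the chosen torsion element $a$ has finite height \emph{in $G$} (secured by the purity of $T_p$) and the clean observation that an infinite-order cyclic subgroup can only be essential in a rank-$1$ torsion-free summand; the latter is what makes a single engineered gap suffice, since such summands have gapless height sequences.
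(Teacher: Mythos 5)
Your proof is correct and follows essentially the same strategy as the paper's: engineer an infinite-order element $x=a+b$ whose $p$-height sequence has a gap, observe that $\langle x\rangle$ can only be essential in a torsion-free rank-one direct summand (where heights agree with those in $G$ and height sequences are gapless), and derive a contradiction. The only difference is bookkeeping: the paper splits off a cyclic summand $G=\langle a\rangle\oplus B$ so that the heights are immediate, whereas you locate an order-$p$ element of finite height via a basic subgroup and purity of $T_p$ in $G$ --- a minor variation that, if anything, spells out the height computations more fully.
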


\begin{proof} If we assume the contrary that some of the $p$-component, say $G_p$, is not divisible, then it will have a cyclic direct summand $\langle a\rangle$ with ${\rm order}(a)=p^n$, which is a direct summand in $G$ as well, writing $G=\langle a\rangle\oplus B$. Suppose $x=a+pb$, where $b\in B$, $o(b)=\infty$. Then, if $\langle x\rangle$ is contained in some direct summand $X$ of $G$ of rank $1$, we derive that $\val x_G=0$ and $\val {p^nx}_G\geq n+1$, that is imposable in a torsion-free group, as expected.
\end{proof}

As a direct consequence, we yield the following.

\begin{corollary}\label{63} If $G$ is a mixed group in which every subgroup is essential in some direct summand of $G$, then $G$ has the following shape $G=D\oplus R$, where $D$ is a divisible group and $R$ is a reduced torsion-free group.
\end{corollary}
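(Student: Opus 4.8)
The plan is to read the decomposition off directly from the two lemmas just established, so that essentially no fresh work is required; the entire content of the statement is already contained in Lemma~\ref{62}. First I would invoke Lemma~\ref{62}: since $G$ is mixed and every subgroup of $G$ is essential in a direct summand, the torsion subgroup $T:=T(G)$ must be divisible. Being divisible, $T$ is an absolute direct summand of $G$, so there is a decomposition $G=T\oplus S$ in which the complement $S$ is torsion-free. The only point here that deserves a word of care is the verification that $S$ is genuinely torsion-free: this is immediate because $T$ already absorbs all the torsion of $G$, so $S\cong G/T(G)$ has trivial torsion subgroup.

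Next I would peel off the divisible part of the torsion-free complement. Every torsion-free group splits as the direct sum of its maximal divisible subgroup and a reduced subgroup, so write $S=D_0\oplus R$, where $D_0$ is torsion-free divisible and $R$ is reduced and torsion-free. Setting $D:=T\oplus D_0$, the group $D$ is divisible as a direct sum of two divisible groups, while $R$ is reduced torsion-free, and plainly $G=T\oplus D_0\oplus R=D\oplus R$. This is exactly the shape asserted in the corollary, so the argument terminates at this point.

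Since both constituent facts---the divisibility of the torsion part and the divisible/reduced splitting of a torsion-free group---are already in hand, there is no genuine obstacle in this deduction; it is purely a matter of assembling the divisible pieces. I would explicitly note that Lemma~\ref{61} is \emph{not} invoked for this particular consequence: the $p$-local criterion it provides is the input reserved for the finer structural descriptions (such as Theorem~\ref{65}) pursued afterwards, whereas here only the global divisibility of $T(G)$ furnished by Lemma~\ref{62} is actually used.
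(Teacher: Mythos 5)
Your proposal is correct and follows exactly the route the paper intends: the paper states this corollary as a direct consequence of Lemma~\ref{62}, and your argument simply makes explicit the standard steps (divisible torsion part splits off with torsion-free complement, which in turn splits into divisible plus reduced) that the authors left unwritten. Your closing observation that Lemma~\ref{61} plays no role here is also accurate.
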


We now have all the machinery to menage the proof of the following statement which, along with Theorem~\ref{6} in hand, insures a complete description of groups in which every subgroup is essential in a direct summand of the whole group.

\begin{theorem}\label{65} If $G$ is either a mixed or a torsion-free group, then every subgroup of $G$ is essential in some direct summand if, and only if, $G=D\oplus R$, where $D=T(D)\oplus D_0$ is a divisible group and $R$ is a homogeneous fully decomposable group of finite rank. In particular, each such a group $G$ is semi-generalized Bassian.
\end{theorem}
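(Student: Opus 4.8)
The plan is to prove the stated equivalence, together with the concluding ``in particular'' clause, by treating necessity and sufficiency separately and, within each, by distinguishing the torsion-free case from the genuinely mixed case. Two preliminary remarks frame everything. First, the property under study --- that \emph{every} subgroup is essential in a direct summand --- plainly implies the semi-generalized Bassian property (the latter demands this conclusion only when there is an injection $G\to G/H$), so the final sentence is automatic once the equivalence is proved. Second, this property descends to summands: if $G=H\oplus K$, $N\le H$, and $N$ is essential in a summand $A$ of $G$, then Lemma~\ref{summandlemma}, applied to the projection $\pi\colon G\to H$, shows that $\pi(A)$ is a summand of $H$ in which $N=\pi(N)$ is essential. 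I will use this inheritance repeatedly to reduce to the reduced torsion-free part.

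\emph{Necessity.} Suppose every subgroup of $G$ is essential in a summand. In the mixed case, Lemma~\ref{62} and Corollary~\ref{63} already give $G=D\oplus R$ with $D=T(D)\oplus D_0$ divisible and $R$ reduced torsion-free; in the torsion-free case I write $G=D_0\oplus R$ with $D_0$ the maximal divisible subgroup and $R$ reduced. In both cases $R$ is a summand and hence inherits the property, so $R$ is semi-generalized Bassian; being reduced torsion-free, Proposition~\ref{4} forces $R$ to have finite rank. Finally, for each rank-one subgroup $\langle x\rangle\le R$ the summand in which it is essential must itself have rank one and therefore equals the purification $\langle x\rangle_*$; thus every pure rank-one subgroup of $R$ is a summand, and \cite[\S 86, Exercise~7]{F1} then identifies $R$ as a homogeneous fully decomposable group of finite rank, as required.

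\emph{Sufficiency, torsion-free case.} Assume $G=D_0\oplus R$ with $D_0$ torsion-free divisible and $R$ homogeneous fully decomposable of finite rank. Since any subgroup $H$ is essential in its purification $H_*$, it suffices to show that every pure subgroup $P\le G$ is a summand. Splitting off the maximal divisible subgroup of $P$ (which is an absolute direct summand of $G$) reduces the problem to the case where $P$ is reduced; then purity together with divisibility of $D_0$ yields $P\cap D_0=\{0\}$, so the projection $\rho\colon G\to R$ embeds $P$ onto a subgroup $\bar P\le R$. A short computation, using that $D_0$ is divisible, shows $\bar P$ is pure in $R$, hence a summand $R=\bar P\oplus R'$. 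Writing $P$ as the graph of a homomorphism $\bar P\to D_0$ and extending it to $R\to D_0$ by injectivity of $D_0$, a shear automorphism of $G$ carries $0\oplus\bar P$ onto $P$, exhibiting $P$ as a summand.

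\emph{Sufficiency, mixed case, and the main obstacle.} Now let $G=T\oplus D_0\oplus R$ with $T=T(G)$ divisible torsion, and let $H\le G$ be arbitrary. The torsion subgroup $H\cap T$ is essential in its divisible hull $T_1\subseteq T$, and $T_1$, being divisible, gives an absolute decomposition $G=T_1\oplus G_1$. Projecting along $T_1$, the image of $H$ is isomorphic to $H/(H\cap T)$, hence torsion-free, and sits inside $G_1\cong T_2\oplus(D_0\oplus R)$ as the graph of a homomorphism into the divisible torsion group $T_2$; a shear, again using injectivity of $T_2$, combined with the torsion-free sufficiency already proved for $D_0\oplus R$, places this image essentially inside a summand $A_1$ of $G_1$. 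I then take $A:=T_1\oplus A_1$ as the candidate summand of $G$ containing $H$. The delicate point --- and what I expect to be the main obstacle --- is verifying that $H$ is genuinely \emph{essential} in $A$: for $s=t+a\in A$ one must produce a nonzero multiple lying in $H$, and a correction term living in $T_1$ obstructs a naive argument. The resolution is that $T_1$ is \emph{torsion}: choosing first a multiple of $a$ into the image of $H$ and then clearing the resulting finite-order $T_1$-component shows that a suitable multiple of $s$ is a nonzero element of $H$, which completes the proof.
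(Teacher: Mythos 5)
Your proposal is correct and follows essentially the same route as the paper's proof: necessity via Lemma~\ref{62}/Corollary~\ref{63}, inheritance of the property to the reduced summand $R$ (Lemma~\ref{summandlemma}), Proposition~\ref{4} for finite rank, and \cite[\S 86, Exercise~7]{F1}; sufficiency by splitting off the divisible hull of the torsion part of the given subgroup and reducing to the torsion-free complement $D_0\oplus R$. The only real difference is expository: where the paper invokes the absolute-direct-summand property of divisible subgroups and disposes of the torsion-free case and the final essentiality verification by citation, you carry these steps out explicitly (graph/shear automorphisms, purity of the projected subgroup, and the multiple-clearing argument using that $T_1$ is torsion and the complementary summand is torsion-free), and all of these details check out.
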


\begin{proof} "\textbf{Necessity}". It follows immediately from Lemma~\ref{62} and \cite[\S86, Exersize 7]{F1}.

\medskip

"\textbf{Sufficiency}". Put $X\leq G$. If we assume for a moment that $X\cap T(D)=\{0\}$, then as a divisible group is always an absolute direct summand (see, for instance, \cite[\S 9, Exersize 8]{F1}), it is possible to choose $D_0\oplus R$ such that $X\leq D_0\oplus R$, and so the assertion is true since the latter group is semi-generalized Bassian.

Let us now $T(X)\neq\{0\}$. Look at the subgroup $\overline{X}=\overline{T(X)}\oplus X_0$, where $\overline{T(X)}$
is the divisible hull of $T(X)$ in $T(D)$. Thus, it is clear that $T(X_0)=\{0\}$ and that $X$ is essential in $\overline{X}$. Likewise, we may choose the direct sum $D_0\oplus R$ so that $X_0\leq D_0\oplus R$. Next, letting $U$
be a direct summand in $D_0\oplus R$ in which $X_0$ is essential, we infer that $\overline{T(X)}\oplus U$ is a direct summand in $G$, whence $X$ is essential in it, as required.
\end{proof}

\section{Concluding Discussion and Open Questions}

We begin this section with the following additional comments.

\begin{remark}\label{9} Unfortunately, we were unable to characterize in a complete form all mixed semi-generalized Bassian groups, although in some partial cases we have done that (see, for more details, Theorem~\ref{65}).
\end{remark}

So, in closing, we pose certain challenging questions of some interest and importance which directly arise.

\medskip

The first question is motivated by the received above characterization results in the mixed case.

\medskip

\noindent{\bf Problem 1.} Is it true that a group $G$ is (semi-)generalized Bassian if, and only if, both $T(G)$ and $G/T(G)$ are so?

\medskip

If {\bf yes}, this will substantially help us to describe in some direction the structure of all mixed (semi-)generalized Bassian groups, because we already have characterized above both the subgroup $T(G)$ and the quotient-group $G/T(G)$ (compare with Proposition~\ref{newone2} as well).

\medskip

The second final question is subsumed by Theorem~\ref{main}.

\medskip

\noindent{\bf Problem 2.} Is any semi-generalized Bassian $p$-group of the sort $G\oplus B$, where $G$ is a generalized Bassian $p$-group and $B$ is a bounded $p$-group such that $f_B(\alpha)=0$ unless either $\alpha=n-1$ or $\alpha=n$?

\medskip


\medskip
\medskip

\noindent {\bf Funding:} The work of the first-named author A.R. Chekhlov was supported by the Ministry of Science and Higher Education of Russia (agreement No. 075-02-2023-943). The work of the second-named author P.V. Danchev was partially supported by the Bulgarian National Science Fund under Grant KP-06 No. 32/1 of December 07, 2019, as well as by the Junta de Andaluc\'ia under Grant FQM 264, and by the BIDEB 2221 of T\"UB\'ITAK.

\vskip3.0pc

\end{document}